\newtheorem{thm}{Theorem}[section]
\newtheorem{lem}[thm]{Lemma}
\newtheorem{prop}[thm]{Proposition}
\theoremstyle{definition}
\newtheorem{defn}[thm]{Definition}
\newtheorem{rem}[thm]{Remark}
\newtheorem{cor}[thm]{Corollary}
\newcommand{\Pro}{\mathbb P}
\newcommand{\mult}{\text{mult}}
\begin{document}

\title{Polynomials Whose Coefficients Coincide with Their Zeros}
\author{Oksana Bihun and Damiano Fulghesu}
\address[Oksana Bihun]{
Department of Mathematics, University of Colorado, Colorado Springs,
1420 Austin Bluffs Pkwy, Colorado Springs, CO~80918,~USA}\email{ obihun@uccs.edu }
\address[Damiano Fulghesu]{ Department of Mathematics, Minnesota State University Moorhead, 1104 7th Ave. South, Moorhead, MN~56563, USA}\email{ fulghesu@mnstate.edu }

\maketitle
\begin{abstract}

In this paper we consider monic polynomials such that their coefficients coincide with their zeros. These polynomials were first introduced by S.~Ulam. We combine methods of algebraic geometry and dynamical systems to prove several results. We obtain estimates on the number of Ulam polynomials of degree $N$. We provide additional methods to obtain algebraic identities satisfied by the zeros of Ulam polynomials, beyond the straightforward comparison of their zeros and coefficients. To address the question about existence of orthogonal Ulam polynomial sequences, we show that the only Ulam polynomial eigenfunctions of hypergeometric type differential operators are the trivial Ulam polynomials $\{x^N\}_{N=0}^\infty$. We propose a family of solvable $N$-body problems such that their stable equilibria are the zeros of  certain Ulam polynomials.
\end{abstract}

\textit{Keywords:} Enumerative geometry; Ulam polynomials; Ulam map; special polynomials; location of zeros.

\textit{MSC:} 26C10; 12E10; 14N10; 33C45.

\section{Introduction}

Let $N$ be a positive integer. We identify the space of all monic polynomials of degree $N$ with complex coefficients with $\mathbb C^N$, by describing each monic polynomial in terms of its non-leading coefficients.  Consider the map $\psi^{(N)}: \mathbb C^N \to \mathbb C^N$ defined by
\begin{eqnarray*}
(c_1, c_2, \dots, c_N) & \mapsto & (-s_1^{(N)}, s_2^{(N)}, \dots, (-1)^N s_N^{(N)}),
\end{eqnarray*}
where $s_j^{(N)}$ is the $j^{\text{th}}$ symmetric polynomial in the $N$ variables $c_i$:
\begin{equation}
s^{(N)}_j=s^{(N)}_j(c_1, \ldots, c_N)=\frac{1}{j!} \sum_{n_1, \ldots, n_j=1}^N c_{n_1} \cdots c_{n_j}.
\label{sj}
\end{equation}
The map $\psi^{(N)}$ sends a monic polynomial $x^N+\sum_{m=1}^N c_m x^{N-m}$ with the coefficients $c_1, c_2, \dots, c_N$ into another monic polynomial $\prod_{n=1}^N(x-c_n)$ whose zeros are exactly $c_1, c_2, \dots, c_N$. The map $\psi^{(N)}$ was proposed by Ulam (see~\cite{Ulam}, p.31), hence we will refer to it as the Ulam map. Ulam wrote that ``Many of the statements about algebraic equations are translatable into the elementary properties of this mapping.'' One of the questions posed by Ulam is the identification of nontrivial fixed points of the map $\psi^{(N)}$, the trivial fixed point being $0\in \mathbb{C}^N$. We address this question in the present paper.

Our goal is to investigate the points $\gamma \in \mathbb C^N$ such that $\psi^{(N)}(\gamma) = \gamma$, that is to say the monic polynomials such that their coefficients coincide with their zeros. In the following we will refer to such polynomials as Ulam polynomials. In~\cite{Stein} it is shown that for $N\geq 5$ the Ulam map $\psi^{(N)}$ does not have a fixed point with the property that all its components are real and distinct from zero.  In this paper, we combine methods of algebraic geometry and dynamical systems to address the following problems. We focus on counting the number of the fixed points of $\psi^{(N)}$ and on  proving the existence  of nontrivial complex fixed points of $\psi^{(N)}$.  We provide additional methods to obtain algebraic identities satisfied by the zeros of Ulam polynomials, beyond the straightforward comparison of their zeros and coefficients.  We make progress on the question about existence of orthogonal Ulam polynomial sequences by showing that the only Ulam polynomial eigenfunctions of hypergeometric type differential operators are the trivial Ulam polynomials $\{x^N\}_{N=0}^\infty$, which are eigenfunctions of the differential operator $\alpha x^2 \frac{d^2}{dx^2}-x\frac{d}{dx}$ with the corresponding eigenvalues $\{N(N-1)\alpha-N\}_{N=0}^\infty$. We propose a family of solvable $N$-body problems such that their stable equilibria are the zeros of certain Ulam polynomials. 

Below we provide several useful definitions and theorems that will be used in the subsequent exposition.

\begin{defn} \label{solutions.at.infinity} {\it (Solutions at infinity)}
Let $\alpha_1, \alpha_2, \dots, \alpha_N$ be polynomials in $\mathbb C [c_1, c_2, \dots, c_N]$, respectively, of degrees $d_1, d_2, \dots, d_N$. Consider the projective space $\Pro^N(\mathbb C)$. We introduce the homogeneous coordinates $\left[ C_0 : C_1 : \dots : C_N \right]$ in $\Pro^N(\mathbb C)$ such that $\mathbb C^N$ is the chart corresponding to the coordinates $\displaystyle c_i=\frac{C_i}{C_0}$ for $i=1,2, \dots , N$. We say that the system $\{ \alpha_i =0\}_{i=1, \dots, N}$ has {\it solutions at infinity} if the system $\left \{ \widehat{\alpha}_i = 0 \right \}_{i=1, \dots, N}$ has solutions for $C_0=0$, where
$$
\widehat{\alpha}_i (C_0, C_1, \ldots, C_N):= (C_0)^{d_i} \cdot \alpha_i\left( \frac{C_1}{C_0}, \frac{C_2}{C_0}, \dots, \frac{C_N}{C_0}\right)
$$
is the homogenization of $\alpha_i$.
\end{defn}

\begin{defn}\label{algebraic.sets}
Let $I$ be an ideal in the polynomial ring $\mathbb C [c_1, c_2, \dots, c_N]$. The variety $V(I)$ is defined as the subset of $\mathbb{C}^N$ containing all the common zeros of the polynomials in the ideal $I$, that is:
\[
V(I):=\{ (\gamma_1, \gamma_2, \dots, \gamma_N) \in \mathbb C ^N | f(\gamma_1, \gamma_2, \dots, \gamma_N)=0 \text{ for all } f \in I\}.
\]
On the other hand, given a set $X \subseteq \mathbb C^N$, we define $I(X)$ as the set of all polynomials such that their zeros are the elements of $X$:
\begin{eqnarray*}
I(X) &:=& \left \{ f \in \mathbb C [c_1, c_2, \dots, c_N] | f(\gamma_1, \gamma_2, \dots, \gamma_N)=0 \right.\\
 & & \left. \text{ for all } (\gamma_1, \gamma_2, \dots, \gamma_N) \in X \right \}.
\end{eqnarray*}
\label{Df:VI}
\end{defn}

It is straightforward to show that $I(X)$ is an ideal.

\begin{rem}
The set $V(I)$ and the ideal $I(X)$ of Definition \ref{algebraic.sets} have a natural generalization to the projective space $\mathbb P^N(\mathbb C)$.
\end{rem}

\begin{defn} The radical of an ideal $I$ in a ring $R$ is the set
$$
\sqrt{I}:=\{ f \in R | f^m \in I \text{ for some } m \in \mathbb N\}.
$$
An ideal $I$ is said to be radical if $\sqrt{I}=I$.
\end{defn}

One of the fundamental theorems in algebraic geometry is the Nullstellensatz (see, for example \cite[Theorem 1.6]{Eisenbud}). We will make use of the following version of the Nullstellensatz.

\begin{thm}\label{nullstellensatz}(Nullstellensatz)
For every ideal $I$  of the polynomial ring $\mathbb C [c_1, c_2, \dots, c_N]$ we have
$$
I(V(I))=\sqrt{I}.
$$
\end{thm}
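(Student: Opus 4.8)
The plan is to split the claimed equality into its two inclusions, one of which is routine and the other of which carries all the content. The inclusion $\sqrt I\subseteq I(V(I))$ is immediate: if $f^m\in I$ for some $m\in\mathbb N$, then for every $\gamma\in V(I)$ we have $f(\gamma)^m=0$, and since $\mathbb C$ is a field this forces $f(\gamma)=0$; hence $f\in I(V(I))$.

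For the opposite inclusion $I(V(I))\subseteq\sqrt I$ I would reduce to the \emph{weak} Nullstellensatz --- the statement that every proper ideal of $\mathbb C[c_1,\dots,c_N]$ has nonempty variety --- by the Rabinowitsch trick. Let $f\in I(V(I))$; we may assume $f\neq 0$, since otherwise $f\in I\subseteq\sqrt I$ trivially. Adjoin one new variable $t$ and set $J:=I\cdot\mathbb C[c_1,\dots,c_N,t]+(1-tf)$ inside the polynomial ring in the $N+1$ variables $c_1,\dots,c_N,t$. A common zero $(\gamma,\tau)$ of $J$ would have $\gamma\in V(I)$, hence $f(\gamma)=0$, contradicting $1-\tau f(\gamma)=0$; thus $V(J)=\emptyset$, and the weak Nullstellensatz gives $J=(1)$. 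Writing $1=\sum_i g_i\alpha_i+h\,(1-tf)$ with $\alpha_i\in I$ and $g_i,h\in\mathbb C[c_1,\dots,c_N,t]$, substituting $t=1/f$ (which annihilates the last summand) and then multiplying through by a sufficiently high power $f^m$ to clear all denominators yields $f^m=\sum_i\widetilde g_i\,\alpha_i$ with $\widetilde g_i\in\mathbb C[c_1,\dots,c_N]$; hence $f^m\in I$, that is, $f\in\sqrt I$.

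It remains to establish the weak Nullstellensatz, and this is where the real work --- and, I expect, the main obstacle --- lies, since it is the only point at which the algebraic closedness of $\mathbb C$ is genuinely used. I would argue it as follows: a proper ideal $I$ is contained in some maximal ideal $\mathfrak m$, and $K:=\mathbb C[c_1,\dots,c_N]/\mathfrak m$ is then a field that is finitely generated as a $\mathbb C$-algebra; by Zariski's lemma such a field is a finite algebraic extension of $\mathbb C$, hence $K=\mathbb C$. Under this identification the quotient map sends each $c_i$ to a scalar $\gamma_i\in\mathbb C$, and the resulting point $\gamma=(\gamma_1,\dots,\gamma_N)$ annihilates every element of $\mathfrak m\supseteq I$, so $\gamma\in V(I)$ and $V(I)\neq\emptyset$. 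The crux is thus Zariski's lemma, which one proves by induction on the number of algebra generators (via the Artin--Tate argument) or deduces from Noether normalization. Since all of this is entirely classical, in the paper itself it would be legitimate simply to cite \cite[Theorem 1.6]{Eisenbud} rather than reproduce the argument.
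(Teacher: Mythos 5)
Your proof is correct: the easy inclusion $\sqrt I\subseteq I(V(I))$, the Rabinowitsch trick reducing the hard inclusion to the weak Nullstellensatz, and Zariski's lemma to establish the latter constitute the standard classical argument. The paper itself does not prove this theorem at all --- it simply quotes it as a known result with a reference to \cite[Theorem 1.6]{Eisenbud} --- so your write-up supplies more than the paper does, and your closing remark that a citation would suffice in context matches exactly what the authors chose to do.
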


\begin{rem}
The Nullstellensatz naturally extends to projective spaces.
\end{rem}

\begin{defn} The dimension of a ring $R$, written $\dim(R)$, is the maximum integer $N$ such that there is a strictly ascending chain of prime ideals
$$
P_0 \subset P_1 \subset \dots \subset P_N.
$$
The dimension of an ideal $I \subset R$, written $\dim(R/I)$ is the dimension of the quotient ring $R/I$.
\end{defn}

In this paper we consider only ideals of dimension 0. An ideal $I$ in $\mathbb C [c_1, c_2, \dots, c_N]$ has dimension 0 if and only if $V(I)$ is a finite set of points.

\begin{prop}\label{solution.exists}
Let $\alpha_1, \alpha_2, \dots, \alpha_N$ be polynomials in $\mathbb C [c_1, c_2, \dots, c_N]$ such that the system $\{ \alpha_i =0\}_{i=1, \dots, N}$ does not have solutions at infinity. Then the system $\{ \alpha_i =0\}_{i=1, \dots, N}$ has at least one solution.
\end{prop}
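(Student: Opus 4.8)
The plan is to pass to the projective space $\Pro^N(\mathbb C)$ and exploit the fact that in $\Pro^N(\mathbb C)$ any $N$ hypersurfaces have nonempty intersection. First I would form the homogenizations $\wh\alpha_1, \dots, \wh\alpha_N \in \mathbb C[C_0, C_1, \dots, C_N]$ as in Definition~\ref{solutions.at.infinity}, so that each $\wh\alpha_i$ is homogeneous of degree $d_i$, and consider the projective variety
\[
Z := V(\wh\alpha_1, \dots, \wh\alpha_N) \subseteq \Pro^N(\mathbb C).
\]
Here one may assume each $\alpha_i$ is nonconstant, since a constant equation is either vacuous (the zero polynomial, which only enlarges $Z$) or makes the hypothesis hold degenerately; so each $\wh\alpha_i$ is a form of positive degree.

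The key step is to show that $Z \neq \emptyset$. For this I would invoke the projective dimension theorem: if $W \subseteq \Pro^N(\mathbb C)$ is a projective variety of dimension $d \geq 1$ and $H \subseteq \Pro^N(\mathbb C)$ is a hypersurface, then $W \cap H$ is nonempty of dimension at least $d-1$. Applying this successively, starting from $\Pro^N(\mathbb C)$ itself (of dimension $N$) and cutting by the hypersurfaces $\{\wh\alpha_1 = 0\}, \dots, \{\wh\alpha_N = 0\}$ one at a time, the dimension drops by at most $1$ at each of the $N$ steps, so $Z$ has dimension at least $0$ and in particular is nonempty. Equivalently, one can argue in the affine cone: the homogeneous system $\{\wh\alpha_i = 0\}$ always has the origin as a solution in $\mathbb C^{N+1}$, so by Krull's height theorem its zero locus in $\mathbb C^{N+1}$ has dimension at least $(N+1) - N = 1$; hence it contains a point other than the origin, which descends to a point of $Z$. (This is also exactly what the projective Nullstellensatz gives: $N$ forms of positive degree in $N+1$ variables cannot have the irrelevant ideal in the radical of the ideal they generate.)

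Now I would use the hypothesis. Since the system $\{\alpha_i = 0\}$ has no solutions at infinity, $Z$ is disjoint from the hyperplane $\{C_0 = 0\}$. Therefore any point $[C_0 : C_1 : \dots : C_N] \in Z$ has $C_0 \neq 0$, and after rescaling we may take $C_0 = 1$. Setting $\gamma_i := C_i$ for $i = 1, \dots, N$, we obtain, for every $i$,
\[
\alpha_i(\gamma_1, \dots, \gamma_N) = \wh\alpha_i(1, \gamma_1, \dots, \gamma_N) = 0,
\]
so $(\gamma_1, \dots, \gamma_N) \in \mathbb C^N$ is a solution of the original system, as required.

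The main obstacle is the nonemptiness of the projective intersection $Z$: once one accepts the projective dimension theorem (equivalently, that $N$ homogeneous forms of positive degree in $N+1$ variables have a common nontrivial zero), the remainder is routine bookkeeping about the two charts $\{C_0 \neq 0\}$ and $\{C_0 = 0\}$ of $\Pro^N(\mathbb C)$. A secondary point worth stating explicitly is the reduction to the case of nonconstant $\alpha_i$, so that the homogenizations are genuine forms of positive degree and the cone argument applies.
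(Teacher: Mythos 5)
Your proof is correct and is essentially the same as the paper's: the paper also homogenizes, invokes the projective dimension theorem (citing Hartshorne, Theorem~7.2) to get a common zero of $\wh\alpha_1,\dots,\wh\alpha_N$ in $\Pro^N(\mathbb C)$, and then uses the no-solutions-at-infinity hypothesis to place that zero in the affine chart $C_0\neq 0$. Your additional remarks (the affine-cone/Krull argument and the reduction to nonconstant $\alpha_i$) are just extra detail on the same argument.
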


\begin{proof}
It is know that the homogenized system $\left \{ \widehat{\alpha}_i = 0 \right \}_{i=1, \dots, N}$ in Definition \ref{solutions.at.infinity} must have a solution in $\mathbb P^N(\mathbb C)$ (see, for example, \cite[Theorem 7.2 Projective Dimension Theorem]{Hartshorne}). Because, by hypothesis, there are no solutions at infinity,  the system  $\{ \alpha_i =0\}_{i=1, \dots, N}$ must have a solution in the open chart $C_0 \neq 0$.
\end{proof}

Another fundamental theorem utilized in this paper is B\'ezout's Theorem, which has several formulations in the literature, each corresponding to a different level of generality. The version we are going to use is a slight modification of equation~(3) on~ p.~145 in~\cite{Fulton}.

\begin{thm}\label{Bezout} (B\'ezout)
Let $\widehat{\alpha}_1, \widehat{\alpha}_2, \dots, \widehat{\alpha}_N$ be homogeneous polynomials in $\mathbb C [C_0, C_1, C_2, \dots, C_N]$ respectively of degree $d_1, d_2, \dots, d_N$. Moreover, let $\widehat{I}$ be the ideal generated by $\widehat{\alpha}_1, \widehat{\alpha}_2, \dots, \widehat{\alpha}_N$ and assume that $\dim (\widehat{I}) = 0$, that is to say the variety associated to $\widehat{I}$ is a finite set of points. Then
$$
\sum_{P \in V(\widehat I)} \mult_{\widehat I}(P) = \prod_{j=1}^N d_j,
$$
where 
$$
\mult_{\widehat I}(P):=\dim_{\mathbb C} \left( \mathbb C [C_0,C_1, C_2, \dots, C_N]_P / \widehat{I}_P \right).
$$
\end{thm}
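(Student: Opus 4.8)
The plan is to compute the Hilbert polynomial of the homogeneous coordinate ring $A:=\mathbb C[C_0,C_1,\dots,C_N]/\widehat I$ in two ways and compare. Write $R:=\mathbb C[C_0,\dots,C_N]$ with its standard grading, and let $X:=\mathrm{Proj}\,A$ be the projective scheme cut out by $\widehat I$. The hypothesis that $V(\widehat I)$ is a finite set of points says $X$ is $0$-dimensional, equivalently $\dim R/\widehat I=1=(N+1)-N$. Since $R$ is a polynomial ring, hence Cohen--Macaulay of dimension $N+1$, a system of $N$ homogeneous elements whose quotient has dimension $N+1-N$ is automatically a regular sequence; thus $\widehat\alpha_1,\dots,\widehat\alpha_N$ is a regular sequence in $R$, and the Koszul complex on $\widehat\alpha_1,\dots,\widehat\alpha_N$ is a graded free resolution of $A$ over $R$.

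First I would read the Hilbert series off this resolution. The $p$-th Koszul module is $\bigoplus_{|S|=p}R\bigl(-\sum_{j\in S}d_j\bigr)$, so additivity of Hilbert series along the resolution gives
$$
H_A(t)=\frac{\prod_{j=1}^N\bigl(1-t^{d_j}\bigr)}{(1-t)^{N+1}}=\frac{1}{1-t}\prod_{j=1}^N\bigl(1+t+\dots+t^{d_j-1}\bigr).
$$
Writing $Q(t):=\prod_{j=1}^N(1+t+\dots+t^{d_j-1})$, a polynomial with $Q(1)=\prod_{j=1}^N d_j$, and expanding $1/(1-t)=\sum_{m\ge0}t^m$, we see that the coefficient of $t^m$ in $H_A(t)$ equals $Q(1)=\prod_{j=1}^N d_j$ for every $m\ge\deg Q=\sum_j(d_j-1)$. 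Hence the Hilbert polynomial of $A$ is the constant $\prod_{j=1}^N d_j$; in particular $\dim_{\mathbb C}A_m=\prod_{j=1}^N d_j$ for $m\gg 0$.

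Second I would identify this number with $\sum_{P\in V(\widehat I)}\mult_{\widehat I}(P)$. Because $X$ is a finite scheme, for $m\gg 0$ one has $A_m\cong H^0(X,\mathcal O_X(m))$ (the ring $A$ agrees with its saturation in large degrees), and $\mathcal O_X(m)$, being a line bundle on a $0$-dimensional scheme, is isomorphic to $\mathcal O_X$; moreover $\mathcal O_X$ is the direct sum of its stalks $\mathcal O_{X,P}$ over the finitely many points $P$ of $X$, each an Artinian local $\mathbb C$-algebra with residue field $\mathbb C$ by the Nullstellensatz. Therefore
$$
\dim_{\mathbb C}A_m=\dim_{\mathbb C}H^0(X,\mathcal O_X)=\sum_{P\in X}\dim_{\mathbb C}\mathcal O_{X,P}=\sum_{P\in V(\widehat I)}\mult_{\widehat I}(P),
$$
the last equality being the definition of $\mult_{\widehat I}(P)$, since the localization $\mathbb C[C_0,\dots,C_N]_P/\widehat I_P$ is precisely $\mathcal O_{X,P}$. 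Comparing with the first computation yields the claim. Alternatively, the identity follows directly from the cited form of B\'ezout's theorem \cite[p.~145]{Fulton}: passing to the projective closure and to local rings at the points of $V(\widehat I)$ only relabels the intersection numbers appearing there.

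The step I expect to be the main obstacle is the interface between algebra and geometry at the two ends of the argument. Making the regular-sequence claim rigorous requires care that ``$\dim\widehat I=0$'' is read projectively, so that the affine cone has dimension exactly $1$ and the length-$N$ sequence $\widehat\alpha_1,\dots,\widehat\alpha_N$ is genuinely a regular sequence rather than merely a generating set (one localizes at the irrelevant maximal ideal and invokes the Cohen--Macaulay property there). And the passage $\dim_{\mathbb C}A_m\to\sum_P\dim_{\mathbb C}\mathcal O_{X,P}$ requires justifying both that $A$ is saturated in high degrees and that twisting by $\mathcal O(1)$ and restricting to the finite scheme $X$ leaves the dimension of global sections unchanged. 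The intermediate steps---the Koszul computation of $H_A(t)$ and the evaluation $Q(1)=\prod_j d_j$---are routine.
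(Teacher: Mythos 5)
Your argument is correct, but note that the paper does not prove this statement at all: it is quoted as a classical result, a slight modification of equation~(3) on p.~145 of \cite{Fulton}, so your proposal supplies a proof where the paper offers a citation. Your route is the standard commutative-algebra argument for a zero-dimensional complete intersection: since $\dim R/\widehat I=1$ in the Cohen--Macaulay ring $R=\mathbb C[C_0,\dots,C_N]$, the forms $\widehat{\alpha}_1,\dots,\widehat{\alpha}_N$ form a regular sequence, the Koszul complex gives the Hilbert series $\prod_{j}(1-t^{d_j})/(1-t)^{N+1}$ whose eventually constant value is $\prod_j d_j$, and the comparison $A_m\cong H^0(X,\mathcal O_X(m))$ for $m\gg0$ together with the triviality of line bundles on the finite scheme $X$ identifies that constant with $\sum_P \dim_{\mathbb C}\mathcal O_{X,P}$. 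What this buys is a self-contained justification that the multiplicities in the statement are exactly the lengths $\dim_{\mathbb C}\left(\mathbb C[C_0,\dots,C_N]_P/\widehat I_P\right)$ of Remark~\ref{mult} (if one instead cites Fulton, one must still observe that for a proper intersection of $N$ hypersurfaces the intersection numbers reduce to these lengths); the cost is invoking Cohen--Macaulayness, saturation in large degrees, and Serre's theorem on twists, machinery the paper sidesteps by quoting the literature. Two points you should make explicit: each $d_j\ge 1$ is needed (so the $\widehat{\alpha}_j$ lie in the irrelevant maximal ideal and $V(\widehat I)\neq\emptyset$), which is harmless here since the forms arise as homogenizations of nonconstant polynomials; and the ring $\mathbb C[C_0,\dots,C_N]_P$ in the statement must be read as the degree-zero part of the homogeneous localization, i.e.\ the local ring $\mathcal O_{\mathbb P^N,P}$, which is precisely the identification with $\mathcal O_{X,P}$ that your last step assumes.
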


\begin{rem}\label{mult}
In Theorem \ref{Bezout} the ring $\mathbb C [C_0, C_1, C_2, \dots, C_N]_P$ is known as the localization of the ring $\mathbb C [C_0,C_1, C_2, \dots, C_N]$ at $P$ and $\widehat{I}_P$ is the ideal $I$ in $\mathbb C [C_0, C_1, C_2, \dots, C_N]_P$ generated by localized forms of the polynomials $\widehat{\alpha}_1, \widehat{\alpha}_2, \dots, \widehat{\alpha}_N$. The definition of multiplicity $\mult_{\widehat I}(P)$ is a natural generalization of multiplicity of a root of a polynomial when $N=1$. In this paper we will not make use of the formal definition of multiplicity. We will only use the fact that, if $X \subseteq \mathbb P^N (\mathbb C)$ is a finite set of points, then $\mult_{I(X)}(P)=1$ for all $P \in X$.
\end{rem}

The following statement is a consequence of B\'ezout's Theorem. For reader's sake we provide a sketch of the proof.

\begin{thm}\label{radical.theorem}
Let $\alpha_1, \alpha_2, \dots, \alpha_N$ be polynomials in $\mathbb C [c_1, c_2, \dots, c_N]$ respectively of degree $d_1, d_2, \dots, d_N$ such that the system $\{ \alpha_i = 0\}_{i =1 \dots N}$ does not have solutions at infinity. Moreover, let $\widehat{I}$ be the ideal generated by the homogenizations $\widehat{\alpha}_1, \widehat{\alpha}_2, \dots, \widehat{\alpha}_N$ of $\alpha_1, \alpha_2, \dots, \alpha_N$ and assume that $\dim (I) =\dim(\widehat{I})= 0$. If $\widehat{I}$ is a radical ideal, then the system $\{ \alpha_i = 0\}_{i =1 \dots N}$ has exactly $\displaystyle \prod_{j=1}^N d_j$ solutions.
\label{thm1}
\end{thm}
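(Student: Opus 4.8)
The plan is to count the solutions of the affine system by counting the projective solutions of the homogenized system and then using the hypotheses to show these coincide and are all simple. First I would invoke Theorem~\ref{Bezout} (B\'ezout) applied to the homogeneous polynomials $\widehat{\alpha}_1, \dots, \widehat{\alpha}_N$: since $\dim(\widehat{I}) = 0$, we get
$$
\sum_{P \in V(\widehat{I})} \mult_{\widehat{I}}(P) = \prod_{j=1}^N d_j.
$$
The right-hand side is the target count, so it suffices to show that $V(\widehat{I})$ has exactly $\prod_{j=1}^N d_j$ points, each with multiplicity $1$, and that all of them lie in the affine chart $C_0 \neq 0$ (so that they correspond bijectively to affine solutions of $\{\alpha_i = 0\}$).

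The second step handles the multiplicities. Since $\widehat{I}$ is a radical ideal by hypothesis, and $V(\widehat{I})$ is a finite set of points (being $0$-dimensional), the projective Nullstellensatz (Theorem~\ref{nullstellensatz}, in its projective form noted in the remark) gives $\widehat{I} = I(V(\widehat{I}))$. Writing $X = V(\widehat{I})$, this means $\widehat{I} = I(X)$ with $X$ a finite set of points, so by the fact recorded in Remark~\ref{mult} we have $\mult_{\widehat{I}}(P) = \mult_{I(X)}(P) = 1$ for every $P \in X$. Hence the B\'ezout sum collapses to $|V(\widehat{I})| = \prod_{j=1}^N d_j$.

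The third step is to transfer this count from projective to affine space. By the hypothesis that $\{\alpha_i = 0\}$ has no solutions at infinity, no point of $V(\widehat{I})$ has $C_0 = 0$; thus every point of $V(\widehat{I})$ lies in the chart $C_0 \neq 0$, which we identify with $\mathbb{C}^N$ via $c_i = C_i/C_0$. Dehomogenization and homogenization are mutually inverse on this chart, and a point of the chart lies in $V(\widehat{I})$ if and only if the corresponding affine point satisfies $\widehat{\alpha}_i(1, c_1, \dots, c_N) = \alpha_i(c_1, \dots, c_N) = 0$ for all $i$ (using that $\dim(I) = 0$ so that $V(I)$ and the affine system agree), establishing a bijection between $V(\widehat{I})$ and the solution set of $\{\alpha_i = 0\}_{i=1,\dots,N}$. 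Combining with the previous step yields exactly $\prod_{j=1}^N d_j$ affine solutions.

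The main obstacle is the bookkeeping in the third step: one must be careful that the ideal $\widehat{I}$ generated by the homogenizations really is the one whose variety B\'ezout computes, and that passing between $\widehat{I}$ on the chart $C_0 \neq 0$ and $I$ in $\mathbb{C}[c_1,\dots,c_N]$ does not introduce or lose components — this is exactly where the ``no solutions at infinity'' and $\dim(I) = \dim(\widehat{I}) = 0$ hypotheses are used, and it is the only place where genuine care (rather than a direct citation) is needed. The multiplicity-one claim, by contrast, is immediate from the radicality hypothesis together with Remark~\ref{mult}. Since the statement only asks for a sketch, I would present steps one and two in full and indicate step three briefly.
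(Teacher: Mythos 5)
Your proposal is correct and follows essentially the same route as the paper's own sketch: radicality plus the (projective) Nullstellensatz gives $\widehat{I}=I(V(\widehat{I}))$, Remark~\ref{mult} then forces all multiplicities to be $1$ so that B\'ezout's Theorem counts $|V(\widehat{I})|=\prod_{j=1}^N d_j$, and the no-solutions-at-infinity hypothesis places every point in the affine chart $C_0\neq 0$. Your third step merely spells out the chart identification that the paper leaves implicit, so there is no substantive difference.
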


\begin{proof}
From the Nullstellensatz (Theorem \ref{nullstellensatz}), we have $\widehat{I}=I(V(\widehat{I}))$. Therefore, by applying the fact mentioned in Remark \ref{mult}, we have
$$
|V(\widehat{I})|=\sum_{P \in V(\widehat I)} 1 = \sum_{P \in V(\widehat I)} 1 = \sum_{P \in V(\widehat I)} \mult_{\widehat I} (P) = \prod_{j=1}^N d_j.
$$
Because, by hypothesis, the system $\{ \alpha_i = 0\}_{i =1 \dots N}$ does not have solutions at infinity, all the solutions must be in the affine chart $\mathbb C^N$.
\end{proof}

\section{Some Properties of the Fixed Points of the Ulam Maps}

Recall that a point ${\gamma}=(\gamma_1, \ldots, \gamma_N)$ is a fixed point of the Ulam map $\psi^{(N)}$  if and only if the zeros of the monic polynomial $p_N(x)=x^N+\sum_{m=1}^N \gamma_m x^{N-m}$ coincide with its coefficients:
\begin{equation}
p_N(x)=x^N+\sum_{m=1}^N \gamma_m x^{N-m}=\prod_{n=1}^N(x-\gamma_n)
\label{UlamPolypN}
\end{equation}
 or, equivalently, these coefficients satisfy the $N\times N$ system
\begin{eqnarray}
c_j=(-1)^j s_j^{(N)}(c_1, \ldots, c_N), \;\;j=1,2,\ldots, N,
\label{systN}
\end{eqnarray}
where the symmetric polynomials $s_j^{(N)}$ are given by~\eqref{sj}.

On the other hand, every fixed point  ${\tilde{\gamma}}=(\tilde{\gamma}_1, \ldots, \tilde{\gamma}_N, \tilde{\gamma}_{N+1})$ of the Ulam map $\psi^{(N+1)}$ satisfies the $(N+1)\times (N+1)$ system
\begin{eqnarray}
\tilde{c}_j=(-1)^j \left[ s_j^{(N)}(\tilde{c}_1, \ldots, \tilde{c}_N)+\tilde{c}_{N+1}s^{(N)}_{j-1} (\tilde{c}_1, \ldots, \tilde{c}_N)\right],  \label{systNp1}\\ \notag
j=1,2,\ldots, N, N+1.
\end{eqnarray}

The last equation in system~(\ref{systNp1}) reads
\begin{equation*}
\tilde{c}_{N+1}=(-1)^N \tilde{c}_1 \cdots \tilde{c}_N \tilde{c}_{N+1}.
\end{equation*}
Clearly, if $\tilde{c}_{N+1}=0$, system~\eqref{systNp1} reduces to system~\eqref{systN}  and it is easy to conclude the following.

\begin{prop}\label{prop.zeros} Suppose that $(\gamma_1, \ldots, \gamma_N) \in \mathbb C^N$ is a fixed point of the map $\psi^{(N)}$. Then the following are true.
\begin{itemize}
\item[(a)] For every positive integer $n$ the point $(\gamma_1, \ldots, \gamma_N, 0\ldots,0) \in \mathbb C^{N+n}$ is a fixed point of the map $\psi^{(N+n)}$.
\item[(b)]  If one of the components of the vector  $(\gamma_1, \ldots, \gamma_N) $ vanishes, then all the subsequent components vanish as well. That is, if   $\gamma_j=0$ for some $j\in \{1,2,\ldots, N-1\}$, then $\gamma_{j+1}=\gamma_{j+2}=\ldots=\gamma_N=0$.
\end{itemize}
\label{PropTrivialFixedPts}
\end{prop}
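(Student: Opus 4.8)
The plan is to handle the two parts separately, each reducing to a one-line manipulation of the defining relation \eqref{UlamPolypN} between a fixed point and its polynomial.

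For part (a), let $(\gamma_1,\ldots,\gamma_N)$ be a fixed point of $\psi^{(N)}$ and let $p_N(x)=x^N+\sum_{m=1}^N\gamma_m x^{N-m}=\prod_{n=1}^N(x-\gamma_n)$ be the associated Ulam polynomial. I would simply multiply by $x^n$: the polynomial $x^n p_N(x)=x^{N+n}+\sum_{m=1}^N\gamma_m x^{N+n-m}$ is monic of degree $N+n$, its list of non-leading coefficients is $(\gamma_1,\ldots,\gamma_N,0,\ldots,0)$ with $n$ appended zeros, and its zeros counted with multiplicity are $\gamma_1,\ldots,\gamma_N$ together with $0$ repeated $n$ times; these two multisets coincide, so $(\gamma_1,\ldots,\gamma_N,0,\ldots,0)\in\mathbb C^{N+n}$ is a fixed point of $\psi^{(N+n)}$. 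Equivalently, one may induct on $n$ and invoke the observation made just before the statement that setting $\tilde c_{N+1}=0$ in system \eqref{systNp1} kills the last equation and reduces the first $N$ of them to \eqref{systN}.

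For part (b), I would argue through the root $x=0$. Let $(\gamma_1,\ldots,\gamma_N)$ be a fixed point of $\psi^{(N)}$ and set $a:=\#\{n\in\{1,\ldots,N\}:\gamma_n=0\}$, the number of vanishing components, so $a\ge 1$ by hypothesis. Since the zeros of $p_N(x)=\prod_{n=1}^N(x-\gamma_n)$ are exactly $\gamma_1,\ldots,\gamma_N$ with multiplicity, $x=0$ is a root of $p_N$ of multiplicity precisely $a$, hence $p_N(x)=x^a r(x)$ for some polynomial $r$ of degree $N-a$ with $r(0)\neq 0$ (the case $a=N$ being the trivial fixed point $p_N(x)=x^N$). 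Comparing this factorization with $p_N(x)=x^N+\sum_{m=1}^N\gamma_m x^{N-m}$, whose monomial of degree $N-m$ has coefficient $\gamma_m$, shows that $p_N$ has no terms of degree $<a$; that is, $\gamma_{N-a+1}=\gamma_{N-a+2}=\cdots=\gamma_N=0$ while $\gamma_{N-a}=r(0)\neq 0$ when $a<N$. Thus the index set on which $\gamma$ vanishes is exactly the tail $\{N-a+1,\ldots,N\}$, so the assumed vanishing index $j$ obeys $j\ge N-a+1$, whence $\{j+1,\ldots,N\}\subseteq\{N-a+1,\ldots,N\}$ and $\gamma_{j+1}=\cdots=\gamma_N=0$.

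Both arguments are elementary and I do not anticipate a genuine obstacle; the only point requiring a little care is the bookkeeping in part (b) — matching the multiplicity of the root $0$ with the count $a$ of zero components, and then matching ``the lowest-degree term of $p_N$ is $x^a$'' with ``the last $a$ coefficients $\gamma_{N-a+1},\ldots,\gamma_N$ vanish.'' If one prefers a fully equation-based derivation of (b), the last equation of \eqref{systN} reads $\gamma_N=(-1)^N\gamma_1\cdots\gamma_N$, forcing $\gamma_N=0$; a downward induction then uses that once $\gamma_{k+1}=\cdots=\gamma_N=0$ one has $\gamma_k=(-1)^k s_k^{(N)}(\gamma_1,\ldots,\gamma_N)=(-1)^k\gamma_1\cdots\gamma_k=0$, propagating the vanishing from $N$ down to $j+1$.
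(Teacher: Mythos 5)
Your proof is correct and follows essentially the route the paper indicates: part (a) is exactly the multiplication-by-$x^n$ observation the paper itself states right after the proposition (equivalently, your remark that setting the last coordinate to zero reduces system~\eqref{systNp1} to~\eqref{systN}), and part (b), which the paper leaves as ``easy to conclude,'' is correctly supplied by your argument --- either the root-multiplicity bookkeeping or the downward induction in which $\gamma_k=(-1)^k\gamma_1\cdots\gamma_k=0$ because the factor $\gamma_j=0$ appears for every $k>j$. No gaps.
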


Another way to illustrate statement (a) of Proposition~\ref{PropTrivialFixedPts} is to say that if the coefficients of a monic polynomial $p_N(x)$ coincide with its zeros, then the coefficients of the monic polynomial $x^n p_N(x)$ also coincide with its zeros.

We now provide several  families of algebraic identities satisfied by the zeros of Ulam polynomials. 

\begin{prop}
Let $p_N(x)$ be an Ulam polynomial of degree $N$ with the coefficients (which are also the zeros) $\gamma=(\gamma_1, \ldots, \gamma_N)$,  see~\eqref{UlamPolypN}. Then the following identities hold for all integer $n$ such that $1\leq n \leq N$:

\begin{eqnarray}
&&\gamma_n^N+\sum_{m=1}^N \gamma_m (\gamma_n)^{N-m}=0, \label{rel1}\\
&&N (\gamma_n)^{N-1} +\sum_{m=1}^{N-1} (N-m)\gamma_m (\gamma_n)^{N-m-1}=\prod_{m=1, m\neq n}^N(\gamma_n-\gamma_m), \label{rel2}\\
&&N(N-1)(\gamma_n)^{N-2}+\sum_{m=1}^{N-2} (N-m)(N-m-1)  \gamma_m(\gamma_n)^{N-m-2}\notag\\
&&\;\;\;\;\;=2\sum_{m=1, m\neq n}^N \prod_{k=1, k\neq n,m}^N (\gamma_n-\gamma_k),\label{rel3}\\
&&(\gamma_n)^N+\sum_{m=1}^N (-1)^m s_m^{(N)}(\gamma) (\gamma_n)^{N-m}=0, \label{rel4}
\end{eqnarray}
\end{prop}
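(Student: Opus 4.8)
The plan is to read all four identities off the two descriptions of an Ulam polynomial given in~\eqref{UlamPolypN}: its \emph{coefficient form} $p_N(x)=x^N+\sum_{m=1}^N\gamma_m x^{N-m}$ and its \emph{factored form} $p_N(x)=\prod_{n=1}^N(x-\gamma_n)$. The unifying device is to expand $p_N(\gamma_n+h)$ in powers of $h$ around a fixed zero $x=\gamma_n$. Since $\gamma_n$ is a zero, the $h^0$-term vanishes; comparing the coefficients of $h^0$, $h^1$, $h^2$ computed from the coefficient form (these are $p_N(\gamma_n)$, $p_N'(\gamma_n)$, $p_N''(\gamma_n)/2$) with those computed from the factored form yields~\eqref{rel1}, \eqref{rel2}, \eqref{rel3} respectively. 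Identity~\eqref{rel4} will then follow from~\eqref{rel1} together with the fixed-point equations~\eqref{systN}.

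In detail, I would first substitute $x=\gamma_n$ into the coefficient form: the relation $p_N(\gamma_n)=0$ is precisely~\eqref{rel1}. Next, differentiating the coefficient form gives $p_N'(x)=Nx^{N-1}+\sum_{m=1}^{N-1}(N-m)\gamma_m x^{N-m-1}$ (the constant term $\gamma_N$ drops), while differentiating the factored form gives $p_N'(x)=\sum_{k=1}^N\prod_{m\neq k}(x-\gamma_m)$; at $x=\gamma_n$ every summand with $k\neq n$ still contains the factor indexed by $m=n$, namely $(x-\gamma_n)$, which vanishes, so only the $k=n$ term survives and equals $\prod_{m\neq n}(\gamma_n-\gamma_m)$. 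Equating the two expressions for $p_N'(\gamma_n)$ is~\eqref{rel2}. Differentiating once more, $p_N''(x)=N(N-1)x^{N-2}+\sum_{m=1}^{N-2}(N-m)(N-m-1)\gamma_m x^{N-m-2}$ from the coefficient form and $p_N''(x)=\sum_{k=1}^N\sum_{l\neq k}\prod_{m\neq k,l}(x-\gamma_m)$ from the factored form; evaluating at $x=\gamma_n$, a double-summand survives only when $n\in\{k,l\}$, and the cases $k=n$ and $l=n$ contribute equal amounts $\sum_{l\neq n}\prod_{m\neq n,l}(\gamma_n-\gamma_m)$, which produces the factor $2$ on the right-hand side of~\eqref{rel3}.

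For~\eqref{rel4}, I would expand the factored form by Vieta's formulas, $\prod_{n=1}^N(x-\gamma_n)=x^N+\sum_{m=1}^N(-1)^m s_m^{(N)}(\gamma)\,x^{N-m}$, and again set $x=\gamma_n$; since $\gamma_n$ is a zero, the right-hand side is $0$, which is exactly~\eqref{rel4}. (Equivalently, one may substitute the fixed-point relations $\gamma_m=(-1)^m s_m^{(N)}(\gamma)$ from~\eqref{systN} into~\eqref{rel1}.)

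I do not anticipate a deep obstacle: the content is elementary. The points that require care are the index bookkeeping in the derivative expansions (which terms vanish under differentiation and the exact summation ranges) and the verification that the arguments remain valid when the zeros $\gamma_1,\ldots,\gamma_N$ are not pairwise distinct. In that situation both sides of~\eqref{rel2} and~\eqref{rel3} vanish at a repeated zero $\gamma_n$ --- the left sides because $\gamma_n$ is then a multiple root of $p_N$, the right sides because the products contain a factor $\gamma_n-\gamma_m=0$ --- so the identities still hold verbatim. It is also worth noting that, as written, \eqref{rel4} invokes the Ulam property only through~\eqref{systN}; the plain identity $(\gamma_n)^N+\sum_{m=1}^N(-1)^m s_m^{(N)}(\gamma)(\gamma_n)^{N-m}=0$ is valid for the zeros of any monic polynomial.
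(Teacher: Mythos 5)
Your proposal is correct and follows essentially the same route as the paper's proof: substitute $x=\gamma_n$ into~\eqref{UlamPolypN} for~\eqref{rel1}, differentiate once and twice before substituting for~\eqref{rel2} and~\eqref{rel3}, and use the fixed-point relations $\gamma_m=(-1)^m s_m^{(N)}(\gamma)$ (equivalently Vieta's formulas) for~\eqref{rel4}. Your additional bookkeeping of the surviving terms in the differentiated factored form and the remark that no distinctness of the $\gamma_n$ is needed are correct refinements of the same argument.
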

where $ s_m^{(N)}$ are the symmetric polynomials defined by~\eqref{sj}.

\begin{proof}
Relation~\eqref{rel1} is obtained by the substitution $x=\gamma_n$ into identity~\eqref{UlamPolypN}. 
Differentiation of  identity~\eqref{UlamPolypN} with respect to $x$, followed by the substitution $x=\gamma_n$, leads to relations~\eqref{rel2} and~\eqref{rel3}. Relation~\eqref{rel4} is obtained from  identity~\eqref{UlamPolypN} via the substitutions $\gamma_m=(-1)^m s_m^{(N)}(\gamma)$, see system~\eqref{systN}, and $x=\gamma_n$.
\end{proof}

To formulate systems equivalent to system~\eqref{systN}, the latter system describing the fixed points of the Ulam map $\psi^{(N)}$, we need the following Lemma.

\begin{lem}
Let $(t_1, \ldots, t_N) \in \mathbb{C}^N$ be a $N$-vector such that its components $t_n$ are all different among themselves.
If $q_N$ and $r_N$ are two monic polynomials of degree $N$ such that $q_N(t_j)=r_N(t_j)$ for all $j \in \{1, \ldots, N\}$, then  $q_N \equiv r_N$, that is, $q_N$ and $r_N$ are equal as polynomials. 
\label{polInterp}
\end{lem}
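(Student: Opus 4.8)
The plan is to consider the difference polynomial $d_N := q_N - r_N$ and argue that it must be identically zero. Since $q_N$ and $r_N$ are both monic of degree $N$, their leading terms $x^N$ cancel, so $d_N$ is a polynomial of degree at most $N-1$. The hypothesis $q_N(t_j) = r_N(t_j)$ for $j = 1, \ldots, N$ means that $d_N(t_j) = 0$ for each of the $N$ distinct values $t_1, \ldots, t_N$.

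Now the key step is the elementary fact that a nonzero polynomial of degree at most $N-1$ over a field (here $\mathbb{C}$) has at most $N-1$ roots. Since $d_N$ has at least $N$ distinct roots (this is where the hypothesis that the $t_n$ are pairwise distinct is essential), it cannot be a nonzero polynomial of degree $\leq N-1$; hence $d_N \equiv 0$, which is exactly the claim $q_N \equiv r_N$.

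There is essentially no obstacle here — the statement is a standard consequence of the fact that a polynomial of degree $k$ over an integral domain has at most $k$ roots, combined with the degree drop that occurs because both polynomials are monic of the same degree. If one wants to be fully self-contained, the finitely-many-roots fact itself follows by induction on the degree using the factor theorem: if $d_N(t_1) = 0$ then $d_N(x) = (x - t_1) e(x)$ with $\deg e \leq N-2$, and since $t_2, \ldots, t_N$ are distinct from $t_1$ and are roots of $d_N$, they must be roots of $e$, so one repeats the argument. The only thing worth emphasizing in the write-up is precisely why the distinctness hypothesis is used and why monicity (rather than merely equal degree) is what guarantees $\deg d_N \leq N-1$ rather than $\deg d_N \leq N$.
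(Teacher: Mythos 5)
Your proof is correct, but it takes a different route from the paper. You pass to the difference $d_N = q_N - r_N$, note that monicity kills the degree-$N$ terms so $\deg d_N \leq N-1$, and then invoke the root-counting fact that a nonzero polynomial of degree at most $N-1$ over $\mathbb{C}$ cannot have $N$ distinct roots (provable by repeated use of the factor theorem). The paper also starts from the difference polynomial, but finishes with linear algebra instead: writing $d_N(x) = \sum_{m=0}^{N-1} a_m x^m$, the conditions $d_N(t_j) = 0$ become a homogeneous linear system $Ta = 0$ whose matrix $T$ with entries $T_{jk} = (t_j)^{k-1}$ is a Vandermonde matrix, and the distinctness of the $t_j$ makes its determinant nonzero, forcing $a = 0$. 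The two arguments encode the same uniqueness-of-interpolation principle; yours is slightly more elementary (no determinants, and it works verbatim over any integral domain), while the paper's Vandermonde formulation makes the role of the distinctness hypothesis visible as the nonvanishing of an explicit determinant and connects directly to the interpolation viewpoint used in the subsequent theorem. Your emphasis on why monicity (rather than merely equal degree) is what lowers the degree of the difference is exactly the right point to flag.
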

\begin{proof}
Consider the polynomial $$f_{N-1}(x)=q_N(x)-r_N(x)=\sum_{m=0}^{N-1} a_m x^{m}.$$ Note that the degree  $\deg(f_{N-1})=N-1$ because the polynomials $q_{N}$ and $r_{N}$ are both monic. Because $q_N(t_j)=r_N(t_j)$ for all $j \in \{1, \ldots, N\}$, the vector of coefficients of $f_{N-1}$, which we denote by $a=(a_0, \ldots, a_{N-1})^T$, satisfies the linear system $Ta=0$ with the $N\times N$ matrix $T$ given componentwise by $T_{jk}=(t_j)^{k-1}$. The matrix $T$ is a Vandermonde matrix, its determinant is known to be nonzero as long as the numbers $t_j$ are all different among themselves. But then $Ta=0$ implies $a=0$, which, in turn, implies $f_{N-1}\equiv 0$.
\end{proof}

\begin{thm} Let $(t_1, \ldots, t_N) \in \mathbb{C}^N$ be a $N$-vector, that is, $t_n$ are fixed complex numbers.

(1) If the numbers $t_1, \ldots, t_N$ are all different among themselves, then system~\eqref{systN} is equivalent to the system
\begin{equation}
(t_j)^N +\sum_{m=1}^N c_m (t_j)^{N-m}=\prod_{m=1}^N (t_j-c_m), \;\;1\leq j\leq N.
\label{syst1}
\end{equation}
(2) If the numbers $t_2, \ldots, t_N$ are all different among themselves, then system~\eqref{systN} is equivalent to the system
\begin{eqnarray}
&&(t_1)^N +\sum_{m=1}^N c_m (t_1)^{N-m}=\prod_{m=1}^N (t_1-c_m),\notag\\
&& N (t_j)^{N-1} +\sum_{m=1}^{N-1} (N-m) (t_j)^{N-m-1} c_m\notag\\
&&\;\;\;\;\;=\sum_{n=1}^N \prod_{m=1, m\neq n}^N(t_j-c_m),
\;\;2\leq j\leq N.
\label{syst2}
\end{eqnarray}

That is, all the fixed points of the Ulam map $\psi^{(N)}$ can be found by solving either one among systems~\eqref{syst1} or~\eqref{syst2}.

\end{thm}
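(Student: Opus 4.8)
The plan is to read both systems as statements about the coincidence of two explicit monic degree-$N$ polynomials evaluated at the prescribed points $t_j$, and then to quote the interpolation Lemma~\ref{polInterp}. Fix $c=(c_1,\dots,c_N)\in\mathbb C^N$ and set $q_N(x)=x^N+\sum_{m=1}^N c_m x^{N-m}$ and $r_N(x)=\prod_{m=1}^N(x-c_m)$; both are monic of degree $N$. Expanding $r_N$ via~\eqref{sj}, the point $c$ satisfies system~\eqref{systN} if and only if $q_N\equiv r_N$ as polynomials in $x$ (this is just~\eqref{UlamPolypN} with $\gamma=c$). So in each part it suffices to show that, for the given choice of pairwise-distinct $t_j$'s, the scalar equations of~\eqref{syst1} (resp.~\eqref{syst2}) are equivalent to the identity $q_N\equiv r_N$.

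For part~(1): system~\eqref{syst1} is exactly the list of equations $q_N(t_j)=r_N(t_j)$, $1\le j\le N$. If $q_N\equiv r_N$ these hold trivially; conversely, if they hold and $t_1,\dots,t_N$ are pairwise distinct, Lemma~\ref{polInterp} yields $q_N\equiv r_N$, hence~\eqref{systN}.

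For part~(2): the first equation of~\eqref{syst2} is $q_N(t_1)=r_N(t_1)$, while for $2\le j\le N$ the left-hand side is $q_N'(t_j)$ (differentiate $q_N$ termwise) and the right-hand side is $r_N'(t_j)$ (by the product rule, $r_N'(x)=\sum_{n=1}^N\prod_{m\neq n}(x-c_m)$). Thus~\eqref{syst2} says $q_N(t_1)=r_N(t_1)$ together with $q_N'(t_j)=r_N'(t_j)$ for $2\le j\le N$; the forward implication is again trivial. For the converse, note that $\tfrac1N q_N'$ and $\tfrac1N r_N'$ are monic of degree $N-1$ and agree at the $N-1$ pairwise-distinct points $t_2,\dots,t_N$, so Lemma~\ref{polInterp} (applied in degree $N-1$) gives $q_N'\equiv r_N'$; therefore $q_N-r_N$ has zero derivative, hence is constant, and since it vanishes at $t_1$ it vanishes identically, i.e.~\eqref{systN} holds.

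The argument is essentially bookkeeping; the only point that needs a little care is matching the two sides of~\eqref{syst2} with $q_N'$ and $r_N'$ and checking that both have leading coefficient $N$, so that Lemma~\ref{polInterp} genuinely applies in degree $N-1$. (One could instead avoid the Lemma in part~(2) and observe directly that $q_N'-r_N'$ has degree at most $N-2$ yet vanishes at the $N-1$ distinct points $t_2,\dots,t_N$, forcing it to be zero.) I do not expect any genuine obstacle beyond this routine identification.
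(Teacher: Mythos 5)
Your proposal is correct and follows essentially the same route as the paper: both reduce \eqref{systN} to the polynomial identity $q_N\equiv r_N$ and then apply the interpolation Lemma~\ref{polInterp} to the values of $q_N,r_N$ at $t_1,\dots,t_N$ for part~(1), and to $q_N(t_1)=r_N(t_1)$ together with the derivative values at $t_2,\dots,t_N$ for part~(2). Your extra care in normalizing $\tfrac1N q_N'$, $\tfrac1N r_N'$ (or arguing via the degree of $q_N'-r_N'$) just fills in a detail the paper's terse chain of equivalences leaves implicit.
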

\begin{proof}
Statements (1) and (2) are obtained by applying  Lemma~\ref{polInterp} to the monic polynomials $q_N(x)=x^N+\sum_{m=1}^N c_m x^{N-m}$ and $r_N(x)=\prod_{n=1}^N (x-c_n)$. 

Indeed, the vector $c=(c_1, \ldots, c_N)$ solves system~\eqref{systN} if and only if $q_N \equiv r_N$ if and only if $q_N(t_j)= r_N(t_j)$ for all $j \in \{1,2, \ldots, N\}$, if and only if $c=(c_1, \ldots, c_N)$ solves system~\eqref{syst1}.

Likewise, the vector $c=(c_1, \ldots, c_N)$ solves system~\eqref{systN} if and only if $q_N' \equiv r_N'$ and $q_N(t_1)=r_N(t_1)$, if and only if $q_N'(t_j)= r_N'(t_j)$ for all $j \in \{2, \ldots, N\}$ and $q_N(t_1)=r_N(t_1)$, if and only if $c=(c_1, \ldots, c_N)$ solves system~\eqref{syst2}.
 
\end{proof}

\section{Number of Ulam polynomials of degree $N$}

Let $U_N$ be the set of Ulam polynomials of degree $N$. In this section we derive some statements on the number $|U_N|$ for arbitrary values of $N$ and also compute $|U_N|$ for small values of $N$.

In the following, we make use of the polynomials $\alpha_j$ defined by
\begin{equation}
 \alpha_j =\alpha_j(c_1,\ldots, c_N)= s_j^{(N)}(c_1, \ldots, c_N) - (-1)^j c_j, \;\; j=1,\ldots,N.
\label{alphadf}
\end{equation}

Let $I_N$ be the ideal in $\mathbb C[c_1, \dots, c_N]$ generated by the set $\{ \alpha_i\}_{i=1,2, \dots N}$. Let $V(I_N)$ be the algebraic variety generated by the ideal $I_N$, see Definition~\ref{Df:VI};  $V(I_N)$ is the set of solutions of the system $\{ \alpha_i=0\}_{i=1,2, \dots N}$.

To use Theorem \ref{radical.theorem}, we show that $\dim(I_N)=0$.

\begin{thm}\label{lemma.dim.0}
For all $N$, the ideal $I_N$ generated by the polynomials  $\{ \alpha_i\}_{i=1,2, \dots N}$ defined by~\eqref{alphadf} has dimension zero, that is,
$$\dim(I_N)=0.$$
In other words, system (\ref{alphadf}) has only finitely many solutions.
\end{thm}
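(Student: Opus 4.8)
The plan is to prove that the system $\{\alpha_i=0\}_{i=1,\dots,N}$ has no solutions at infinity in the sense of Definition~\ref{solutions.at.infinity}, and then to deduce $\dim(I_N)=0$ from the Projective Dimension Theorem exactly as in the proof of Proposition~\ref{solution.exists}. The first observation is that $s_j^{(N)}$ is homogeneous of degree $j$ in $c_1,\dots,c_N$ while $(-1)^jc_j$ has degree $1$, so for $j\geq 2$ the top-degree part of $\alpha_j$ is $s_j^{(N)}$, whereas $\alpha_1=s_1^{(N)}(c)+c_1=2c_1+c_2+\cdots+c_N$ is itself homogeneous of degree $1$. Consequently the homogenizations satisfy $\widehat\alpha_j(0,C_1,\dots,C_N)=s_j^{(N)}(C_1,\dots,C_N)$ for $j\geq 2$ and $\widehat\alpha_1(0,C_1,\dots,C_N)=2C_1+C_2+\cdots+C_N$, so a solution at infinity is precisely a nonzero point $(C_1,\dots,C_N)\in\mathbb C^N$ annihilating the linear form $2C_1+C_2+\cdots+C_N$ together with all of $s_2^{(N)},\dots,s_N^{(N)}$.

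I would then show no such point exists. Suppose $C=(C_1,\dots,C_N)\neq 0$ annihilates all these forms, and let $\ell\geq 1$ be the number of nonzero coordinates of $C$. When $N-\ell$ of the arguments of $s_j^{(N)}$ are set to zero, $s_j^{(N)}(C)$ reduces to the $j$-th elementary symmetric polynomial in the $\ell$ remaining nonzero arguments; hence $s_\ell^{(N)}(C)$ equals the product of those $\ell$ coordinates and is nonzero. Since the condition $s_j^{(N)}(C)=0$ holds for every $j$ with $2\leq j\leq N$, this forces $\ell=1$. But if $C$ has a single nonzero coordinate $C_k$, then $2C_1+C_2+\cdots+C_N$ equals $C_k$ or $2C_k$, which is nonzero --- a contradiction. (It is worth noting that this final step hinges on the sign $(-1)^1$ in the definition of $\alpha_1$: it is precisely the resulting term $+c_1$ that makes $\alpha_1$ a linear form with no vanishing coefficient, which is what excludes the ``coordinate'' points $[0:\cdots:C_k:\cdots:0]$ at infinity that the forms $s_2^{(N)},\dots,s_N^{(N)}$ by themselves do not exclude.) Hence the system $\{\alpha_i=0\}_{i=1,\dots,N}$ has no solutions at infinity.

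To finish, I would argue that $V(\widehat I_N)\subseteq\mathbb P^N(\mathbb C)$, the common zero locus of the $N$ homogeneous polynomials $\widehat\alpha_1,\dots,\widehat\alpha_N$, is nonempty of dimension $\geq 0$ by the Projective Dimension Theorem, and that if it had an irreducible component of dimension $\geq 1$ then that component would be forced to meet the hyperplane $\{C_0=0\}$, contradicting the absence of solutions at infinity. Hence $V(\widehat I_N)$ is a finite set of points; being disjoint from $\{C_0=0\}$, it coincides with $V(I_N)$ under the identification $c_i=C_i/C_0$, so $V(I_N)$ is finite and therefore $\dim(I_N)=0$ (and likewise $\dim(\widehat I_N)=0$, which is what Theorem~\ref{radical.theorem} will use later).

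I expect the only step needing genuine care --- the main obstacle --- to be the combinatorial claim in the second paragraph: one must handle the linear form $\alpha_1$ separately from the higher-degree forms $s_2^{(N)},\dots,s_N^{(N)}$, and recognize that it is the specific sign convention in $\alpha_j=s_j^{(N)}-(-1)^jc_j$ that makes this separate treatment succeed. Everything else is a routine application of the projective-geometry facts already recorded in the excerpt.
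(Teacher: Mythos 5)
Your proposal is correct and follows essentially the same route as the paper: homogenize, observe that the system at $C_0=0$ reduces to $2C_1+C_2+\cdots+C_N=0$ together with $s_j^{(N)}(C_1,\dots,C_N)=0$ for $j\geq 2$, show it has no nonzero solution, and conclude via the standard fact that a positive-dimensional component of $V(I_N)$ would force its projective closure to meet the hyperplane at infinity. Your count-of-nonzero-coordinates argument (using $s_\ell^{(N)}(C)\neq 0$) is just a more explicit rendering of the paper's terse ``going backward through the system'' step, and your observation about the sign in $\alpha_1$ is accurate but not a departure from the paper's method.
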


\begin{proof}
The statement has already been proved in \cite{ScalaMacia}. We provide a slightly different proof in order to point out a crucial step that we will use in Corollary \ref{cor.infinity}.

Consider the projective space $\Pro^N(\mathbb C)$. We introduce the homogeneous coordinates $\left[ C_0 : C_1 : \dots : C_N \right]$ in $\Pro^N(\mathbb C)$ such that $\mathbb C^N$ is the chart corresponding to the coordinates $\displaystyle c_i=\frac{C_i}{C_0}$ for $i=1,2, \dots , N$. Using standard results in algebraic geometry, we conclude the following. If $V(I_N)$ has a component with positive dimension, then its closure $\overline{V(I_N)}$ in $\Pro^N(\mathbb C)$ must intersect the hyperplane $C_0=0$. The algebraic set $\overline{V(I_N)}$ is defined by the equations
\begin{equation}\label{hom.eq}
 s_j^{(N)}(C_1, \ldots, C_N) - (-1)^j C_j C_0^{j-1}=0, \;\; j=1,2,\ldots,N.
\end{equation}
By substituting $C_0=0$ into system (\ref{hom.eq}), we obtain the following system:
\begin{eqnarray}
s_1^{(N)}(C_1, \ldots, C_N) &=& - C_1,\notag \\
s_j^{(N)}(C_1, \ldots, C_N) &=& 0, \;\; j=2,\ldots,N-1,\notag \\
C_1C_2 \dots C_N =0.
\label{hom.eq.0}
\end{eqnarray}
The last equation in system (\ref{hom.eq.0}) implies that at least one among the $C_i$ for $i=1 \dots N$ must equal zero. By going backward through system~\eqref{hom.eq.0}, we obtain that all $C_i$ must equal zero, hence the system has no solutions in $\Pro^N(\mathbb C)$.
\end{proof}

\begin{cor}\label{cor.infinity}
System $\alpha_j=0$ for $j=1, \dots, N$ does not have solutions at infinity, that is, the compactification $\overline{V(I_N)}$ does not intersect the hyperplane $C_0=0$ in the projective space $\Pro^N(\mathbb C)$ defined above.
\end{cor}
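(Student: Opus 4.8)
The plan is to read this corollary off from the computation already carried out inside the proof of Theorem~\ref{lemma.dim.0}. By Definition~\ref{solutions.at.infinity}, the system $\{\alpha_j=0\}_{j=1,\dots,N}$ has solutions at infinity precisely when the homogenized system $\{\widehat{\alpha}_j=0\}_{j=1,\dots,N}$ has a zero $[0:C_1:\dots:C_N]\in\Pro^N(\mathbb C)$ lying on the hyperplane $C_0=0$. So it suffices to show that the homogenized system has no zero with $C_0=0$.

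First I would pin down the degrees $d_j=\deg\alpha_j$. Since $\deg s_j^{(N)}=j$ while $\deg c_j=1$, we get $d_j=j$ for every $j\in\{1,\dots,N\}$; for $j=1$ note that $\alpha_1=s_1^{(N)}+c_1=2c_1+c_2+\dots+c_N$ still has degree $1$, and for $j\ge 2$ the degree-$j$ part of $\alpha_j$ is $s_j^{(N)}\neq 0$, so no cancellation occurs. Consequently the homogenization of $\alpha_j$ in the sense of Definition~\ref{solutions.at.infinity} is
$$
\widehat{\alpha}_j(C_0,C_1,\dots,C_N)=C_0^{\,j}\,\alpha_j\!\left(\frac{C_1}{C_0},\dots,\frac{C_N}{C_0}\right)=s_j^{(N)}(C_1,\dots,C_N)-(-1)^j C_j C_0^{\,j-1},
$$
which is exactly the left-hand side of equation~\eqref{hom.eq}. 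Substituting $C_0=0$ then produces precisely system~\eqref{hom.eq.0}, and the argument already displayed in the proof of Theorem~\ref{lemma.dim.0} applies verbatim: the relation $C_1C_2\cdots C_N=0$ forces some coordinate to vanish, and propagating this back through the equations $s_j^{(N)}(C_1,\dots,C_N)=0$ for $j=N-1,\dots,2$ and $s_1^{(N)}(C_1,\dots,C_N)=-C_1$ forces $C_1=C_2=\dots=C_N=0$, which is not a point of $\Pro^N(\mathbb C)$. Hence the homogenized system has no zero with $C_0=0$, i.e.\ no solutions at infinity. Since the projective closure $\overline{V(I_N)}$ is contained in the common zero locus $V(\widehat{\alpha}_1,\dots,\widehat{\alpha}_N)$ of the homogenized generators, it is in particular disjoint from $\{C_0=0\}$, giving the second formulation in the statement.

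I do not expect any genuine obstacle: the corollary is essentially a repackaging of the ``crucial step'' that the proof of Theorem~\ref{lemma.dim.0} was written to isolate. The only point deserving a line of care is checking that the homogenization convention in Definition~\ref{solutions.at.infinity} agrees with the homogeneous equations~\eqref{hom.eq}, which is settled by the degree count $d_j=j$ above; everything else is a direct citation of that proof.
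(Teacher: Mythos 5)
Your proposal is correct and follows the paper's intended argument exactly: the corollary is read off from the $C_0=0$ computation isolated in the proof of Theorem~\ref{lemma.dim.0}, which reduces to system~\eqref{hom.eq.0} and forces $C_1=\dots=C_N=0$, impossible in $\Pro^N(\mathbb C)$. Your added checks (the degrees $d_j=j$ matching the homogenization~\eqref{hom.eq}, and the containment $\overline{V(I_N)}\subseteq V(\widehat\alpha_1,\dots,\widehat\alpha_N)$ rather than equality) are careful fill-ins of details the paper leaves implicit, not a different route.
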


In conclusion, we have that the system $\alpha_j=0$ for $j=1, \dots, N$ satisfies all the hypotheses of Theorem \ref{radical.theorem} except for the radicality of the ideal $\widehat{I}_N$ generated by the homogenizations $\{ \widehat{\alpha}_1, \widehat{\alpha}_2, \dots, \widehat{\alpha}_N\}$ of the polynomials $\{ \alpha_1, \alpha_2, \dots, \alpha_N\}$.
In Subsection~\ref{subseccases} we  show that $\widehat{I}_N$ is not radical  if $N \geq 4$, see Theorem~\ref{INNotRadical}. Instead, we make use of a modified ideal $\widetilde{I}^*_N$, which we verify to be radical for all $N\leq 5$, see Subsection~\ref{subseccases}.
\vspace{2mm}

Recall that $I_N$ is the ideal generated by the polynomials $\{\alpha_j\}_{j=1}^N$ defined by~\eqref{alphadf}. We define a new ideal $\widetilde{I}_N := \left< \alpha_1, \dots, \alpha_{N-1}, \widetilde{\alpha}_N\right>$ where $\widetilde{\alpha}_N = s_{N-1}^{(N-1)}(c_1, \ldots, c_{N-1}) - (-1)^N$. In addition, we make use of the ideal  $\widetilde{I}^*_N$ defined to be the ideal generated by the \textit{homogenizations} of the polynomials $\left \{ \alpha_1, \dots, \alpha_{N-1}, \widetilde{\alpha}_N \right \}$. Note that $\tilde{I}_N^*$ is a {homogeneous} ideal in $\mathbb{C}[C_0, C_1, \ldots, C_N]$ while $\tilde{I}_N$ is an ideal in $\mathbb{C}[c_1, \ldots, c_N]$.

Since $\alpha_N = c_N \cdot \widetilde{\alpha}_N$, we have $I_N \subset \widetilde{I}_N$ and therefore $V(\widetilde{I}_N) \subset V(I_N)$. In particular, $V(\widetilde{I}_N)$ contains all the solutions of system~\eqref{systN} such that $c_N \neq 0$. However, the set $V(\widetilde{I}_N)$ may still contain some solutions with $c_N=0$. Let us also define the ideal $I_N^0:= \left< I_N, c_N \right>$.

From the inclusion-exclusion principle we have
\begin{equation}\label{incl.excl}
|U_N| = |V(I_N^0)| + |V(\widetilde{I}_N)| - |V(I_N^0) \cap V(\widetilde{I}_N)|.
\end{equation}

The following  statements shed light on the number of Ulam polynomials in $V(I_N^0)$ and $V(\tilde{I}_N)$.

\begin{prop}
For all integers $N>1$, we have the identity
$$
|V(I_N^0)| = |U_{N-1}|.
$$
That is, the number of Ulam polynomials in $V(I_N^0)$ equals the number of Ulam polynomials of degree $N-1$.
\end{prop}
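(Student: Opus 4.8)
The plan is to produce an explicit bijection between $V(I_N^0)$ and the set of fixed points of the Ulam map $\psi^{(N-1)}$, the latter being in obvious bijection with $U_{N-1}$ (since a fixed point of $\psi^{(N-1)}$ is exactly the coefficient vector of an Ulam polynomial of degree $N-1$, and we identify monic degree-$(N-1)$ polynomials with $\mathbb{C}^{N-1}$). The bijection will simply be the map that deletes the last coordinate. First I would unwind the definition $I_N^0 = \langle I_N, c_N\rangle$: a point $\gamma = (\gamma_1,\ldots,\gamma_N)$ lies in $V(I_N^0)$ precisely when $\gamma_N = 0$ and $\gamma$ satisfies system~\eqref{systN}, so every point of $V(I_N^0)$ has the form $(\gamma_1,\ldots,\gamma_{N-1},0)$.

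The key computation is that adjoining a zero variable does not affect elementary symmetric functions of lower order: $s_j^{(N)}(\gamma_1,\ldots,\gamma_{N-1},0) = s_j^{(N-1)}(\gamma_1,\ldots,\gamma_{N-1})$ for $1 \le j \le N-1$, while $s_N^{(N)}(\gamma_1,\ldots,\gamma_{N-1},0) = 0$. Substituting $\gamma_N = 0$ into system~\eqref{systN}, the equation for $j = N$ collapses to $0 = 0$ and drops out, while the equations for $j = 1,\ldots,N-1$ become exactly system~\eqref{systN} written for degree $N-1$ in the variables $\gamma_1,\ldots,\gamma_{N-1}$. Hence $(\gamma_1,\ldots,\gamma_{N-1},0) \in V(I_N^0)$ if and only if $(\gamma_1,\ldots,\gamma_{N-1})$ is a fixed point of $\psi^{(N-1)}$; the ``only if'' direction is also an immediate consequence of Proposition~\ref{prop.zeros}(a).

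It then remains to observe that the projection $\pi\colon (\gamma_1,\ldots,\gamma_{N-1},0) \mapsto (\gamma_1,\ldots,\gamma_{N-1})$ is injective on $V(I_N^0)$ and, by the previous paragraph, carries $V(I_N^0)$ bijectively onto the set of fixed points of $\psi^{(N-1)}$; comparing cardinalities yields $|V(I_N^0)| = |U_{N-1}|$. I do not expect a serious obstacle here; the only point requiring care is the bookkeeping in the reduction — namely that after setting $\gamma_N = 0$ the last equation is genuinely vacuous (rather than merely ``equivalent to'' the remaining ones) and that what is left is literally system~\eqref{systN} for $N-1$, so that the correspondence lands in $U_{N-1}$ on the nose. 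In effect the proposition is a quantitative refinement of parts (a) and (b) of Proposition~\ref{prop.zeros}.
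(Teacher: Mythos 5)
Your argument is correct and is essentially the paper's proof: the paper exhibits the bijection as $\phi\colon U_{N-1}\to U_N^0$, $P(x)\mapsto x\,P(x)$, which in coordinates is exactly your map appending (respectively deleting) a zero last coordinate, and your symmetric-function computation just makes explicit the "straightforward" verification the paper omits. No gap; same approach.
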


\begin{proof}
Let $U_N^{0}$ denote the set of Ulam polynomials of degree $N$ that have a root $x=0$  (i.e. divisible by $x$). Consider the map $\phi:U_{N-1} \to U_N^{0}$ defined by  $\phi(P(x))=x\cdot P(x)$. It is straightforward to show that $\phi$ is a bijection.
\end{proof}

\begin{lem}\label{I.tilde.solution}
The system $\left \{ \alpha_1=0, \dots, \alpha_{N-1}=0, \widetilde{\alpha}_N=0 \right \}$ has at least one solution. That is, there is at least one Ulam polynomial in $V(\tilde{I}_N)$.
\end{lem}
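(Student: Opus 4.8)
The plan is to deduce the existence of a solution from Proposition~\ref{solution.exists} (throughout we assume $N\ge 2$, as in the surrounding discussion). The system $\{\alpha_1=0,\dots,\alpha_{N-1}=0,\widetilde\alpha_N=0\}$ consists of exactly $N$ polynomials in the $N$ variables $c_1,\dots,c_N$, of degrees $1,2,\dots,N-1$ and $N-1$ respectively, so by Proposition~\ref{solution.exists} it is enough to show that this system has no solutions at infinity. Equivalently (Definition~\ref{solutions.at.infinity}), we must show that $V(\widetilde I_N^*)$ does not meet the hyperplane $C_0=0$ in $\Pro^N(\mathbb C)$, where $\widetilde I_N^*=\langle\widehat\alpha_1,\dots,\widehat\alpha_{N-1},\widehat{\widetilde\alpha}_N\rangle$ is the ideal generated by the homogenizations.

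The key step is to use the factorization $\alpha_N=c_N\cdot\widetilde\alpha_N$ noted above. Since $\deg\alpha_N=N$, $\deg c_N=1$, $\deg\widetilde\alpha_N=N-1$ and $N=1+(N-1)$, this factorization homogenizes compatibly: a one-line computation shows that $\widehat\alpha_N$ and $C_N\cdot\widehat{\widetilde\alpha}_N$ both equal $C_N\bigl(C_1C_2\cdots C_{N-1}-(-1)^N C_0^{N-1}\bigr)$, so $\widehat\alpha_N=C_N\cdot\widehat{\widetilde\alpha}_N$ in $\mathbb C[C_0,C_1,\dots,C_N]$. In particular $\widehat\alpha_N\in\widetilde I_N^*$; together with the fact that $\widehat\alpha_1,\dots,\widehat\alpha_{N-1}$ are already generators of $\widetilde I_N^*$, this gives $\widehat I_N\subseteq\widetilde I_N^*$, hence $V(\widetilde I_N^*)\subseteq V(\widehat I_N)$.

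By Corollary~\ref{cor.infinity} the set $V(\widehat I_N)$ does not intersect the hyperplane $C_0=0$, so neither does $V(\widetilde I_N^*)$. Thus $\{\alpha_1,\dots,\alpha_{N-1},\widetilde\alpha_N\}$ has no solutions at infinity, and Proposition~\ref{solution.exists} produces a solution in $\mathbb C^N$, i.e. an Ulam polynomial in $V(\widetilde I_N)$, as claimed.

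There is no serious obstacle here; the only point requiring care is the degree bookkeeping in the identity $\widehat\alpha_N=C_N\widehat{\widetilde\alpha}_N$ — one homogenizes $\widetilde\alpha_N$ in degree $N-1$, matching $\deg\widetilde\alpha_N$, not in degree $N$. If one prefers to avoid the factorization, the alternative is to mimic the proof of Theorem~\ref{lemma.dim.0} directly: substitute $C_0=0$ into $\{\widehat\alpha_1=0,\dots,\widehat\alpha_{N-1}=0,\widehat{\widetilde\alpha}_N=0\}$, observe that $\widehat{\widetilde\alpha}_N|_{C_0=0}=0$ reads $s_{N-1}^{(N-1)}(C_1,\dots,C_{N-1})=0$, and use the identity $s_N^{(N)}(C_1,\dots,C_N)=C_N\,s_{N-1}^{(N-1)}(C_1,\dots,C_{N-1})$ to recover exactly system~\eqref{hom.eq.0}, whose only solution is $C_1=\dots=C_N=0$.
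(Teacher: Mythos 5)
Your proof is correct, and its main line takes a genuinely different (if closely related) route from the paper's. Both arguments rest on Proposition~\ref{solution.exists}, so everything reduces to excluding solutions at infinity. The paper does this by repeating the computation from the proof of Theorem~\ref{lemma.dim.0}: homogenize, set $C_0=0$, and reduce to a system analogous to~\eqref{hom.eq.0} whose only solution is $C_1=\cdots=C_N=0$ --- this is exactly the fallback argument in your closing paragraph, with the small precision that the reduced system has last equation $C_1\cdots C_{N-1}=0$ and hence implies, rather than literally coincides with, system~\eqref{hom.eq.0}. Your primary argument instead homogenizes the factorization $\alpha_N=c_N\widetilde\alpha_N$ (your degree bookkeeping $\widehat\alpha_N=C_N\,\widehat{\widetilde\alpha}_N$ is correct) to obtain the containment $\widehat I_N\subseteq\widetilde I_N^*$, hence $V(\widetilde I_N^*)\subseteq V(\widehat I_N)$, and then invokes Corollary~\ref{cor.infinity}. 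This buys a computation-free deduction that makes the logical dependence on the earlier result explicit; its one delicate point is that you must read Corollary~\ref{cor.infinity} in the sense of Definition~\ref{solutions.at.infinity}, i.e.\ as saying that the common zeros of the homogenized generators $\widehat\alpha_1,\dots,\widehat\alpha_N$ avoid $C_0=0$, and not merely that the projective closure $\overline{V(I_N)}$ (which a priori can be smaller than $V(\widehat I_N)$) avoids it. That stronger reading is precisely what the proof of Theorem~\ref{lemma.dim.0} establishes, so the appeal is legitimate. In short: the paper's route is self-contained at the cost of redoing the $C_0=0$ reduction, while yours is shorter and reuses Corollary~\ref{cor.infinity} through the ideal containment.
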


\begin{proof}
From Proposition \ref{solution.exists}, it is enough to show that the system $\left \{ \alpha_1=0, \dots, \alpha_{N-1}=0, \widetilde{\alpha}_N=0 \right \}$ does not have solutions at infinity. By arguing as in the proof of Theorem \ref{lemma.dim.0}, we reduce the last system to a system similar to system~\eqref{hom.eq.0} with homogeneous coordinates, from which we determine again that $C_1=C_2= \dots=C_N=0$.
\end{proof}

Recall that $\widetilde{I}^*_N$ is the homogeneous ideal in $\mathbb{C}[C_0, C_1, \ldots, C_N]$ generated by the homogenizations of the polynomials $\left \{ \alpha_1, \dots, \alpha_{N-1}, \widetilde{\alpha}_N \right \}$.

\begin{thm}\label{rad.ideal.I.tilde}
If $\widetilde{I}^*_N$ is radical, then  $|V(\widetilde{I}_N)|= (N-1)\cdot(N-1)!$ for all integers $N>0$.
\end{thm}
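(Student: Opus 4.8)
The plan is to invoke Theorem~\ref{radical.theorem} applied to the polynomials $\alpha_1, \dots, \alpha_{N-1}, \widetilde{\alpha}_N$ rather than to the original system $\{\alpha_j\}_{j=1}^N$. To do so I must check each hypothesis of that theorem: (i) the system $\{\alpha_1 = 0, \dots, \alpha_{N-1}=0, \widetilde{\alpha}_N = 0\}$ has no solutions at infinity; (ii) the affine ideal $\widetilde{I}_N$ and the homogeneous ideal $\widetilde{I}_N^*$ both have dimension zero; (iii) $\widetilde{I}_N^*$ is radical — which is exactly the hypothesis we are granting. Point (i) is already established inside the proof of Lemma~\ref{I.tilde.solution}: arguing as in the proof of Theorem~\ref{lemma.dim.0}, setting $C_0 = 0$ in the homogenized system forces $C_1 = C_2 = \dots = C_N = 0$, so there is no point of $V(\widetilde{I}_N^*)$ on the hyperplane $C_0 = 0$. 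Granting radicality of $\widetilde{I}_N^*$, this absence of solutions at infinity also yields $\dim(\widetilde{I}_N) = \dim(\widetilde{I}_N^*) = 0$ by the same closure-intersection argument used in Theorem~\ref{lemma.dim.0} (a positive-dimensional component of an affine variety has projective closure meeting every hyperplane).

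With all hypotheses of Theorem~\ref{radical.theorem} verified, I conclude that the number of affine solutions equals the product of the degrees $\prod_{j=1}^{N} d_j$, where $d_j = \deg(\alpha_j)$ for $j = 1, \dots, N-1$ and $d_N = \deg(\widetilde{\alpha}_N)$. The key remaining computation is to read off these degrees from the definitions in~\eqref{alphadf}. Since $s_j^{(N)}(c_1, \dots, c_N)$ is homogeneous of degree $j$ and the term $(-1)^j c_j$ has degree $1$, we get $\deg(\alpha_j) = j$ for $j \geq 2$ and $\deg(\alpha_1) = 1$; thus $\deg(\alpha_j) = j$ for each $j = 1, \dots, N-1$. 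The polynomial $\widetilde{\alpha}_N = s_{N-1}^{(N-1)}(c_1, \dots, c_{N-1}) - (-1)^N$ has degree $N-1$, since $s_{N-1}^{(N-1)}$ is (up to a constant factor) the product $c_1 \cdots c_{N-1}$ of $N-1$ variables and the constant term contributes nothing to the degree. Therefore
\begin{equation*}
|V(\widetilde{I}_N)| = \prod_{j=1}^{N-1} \deg(\alpha_j) \cdot \deg(\widetilde{\alpha}_N) = \left( \prod_{j=1}^{N-1} j \right) \cdot (N-1) = (N-1)! \cdot (N-1),
\end{equation*}
which is the claimed count.

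The one genuinely delicate point in this argument is the passage from ``$\widetilde{I}_N^*$ radical'' to the dimension-zero statements for $\widetilde{I}_N$ and $\widetilde{I}_N^*$, and the assertion that $V(\widetilde{I}_N^*)$ does not meet $C_0 = 0$. I expect to handle this exactly as in Theorem~\ref{lemma.dim.0}: homogenize each $\alpha_j$ and $\widetilde{\alpha}_N$, set $C_0 = 0$, and observe that the resulting system forces all homogeneous coordinates to vanish — there is no projective point with $C_0 = 0$ satisfying it, so $\overline{V(\widetilde{I}_N)}$ is a finite subset of the affine chart and $\dim(\widetilde{I}_N) = 0$; combined with Bézout applied to the radical ideal $\widetilde{I}_N^*$, the count of projective solutions equals $\prod_j d_j$ and all of them are affine. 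A subtlety worth flagging is that $\widetilde{\alpha}_N$ involves only the first $N-1$ variables while living in $\mathbb{C}[c_1, \dots, c_N]$, so its homogenization in $\mathbb{C}[C_0, \dots, C_N]$ likewise does not involve $C_N$; one should check this does not introduce a spurious component at infinity along the $C_N$-axis, but the chain of implications in~\eqref{hom.eq.0}, run with $\widetilde\alpha_N$ in place of $\alpha_N$, still propagates the vanishing back through $s_{N-1}^{(N)} = 0, \dots, s_2^{(N)} = 0$ and $s_1^{(N)} = -C_1$ to force every $C_i = 0$.
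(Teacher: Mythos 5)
Your proof is correct and takes essentially the same route as the paper: verify the hypotheses of Theorem~\ref{radical.theorem} for the system $\{\alpha_1,\dots,\alpha_{N-1},\widetilde{\alpha}_N\}$ --- no solutions at infinity via the argument of Lemma~\ref{I.tilde.solution}, dimension zero, and the assumed radicality of $\widetilde{I}^*_N$ --- and then multiply the degrees $1\cdot 2\cdots (N-1)\cdot (N-1)=(N-1)\cdot(N-1)!$. The only cosmetic difference is that the paper obtains finiteness of $V(\widetilde{I}_N)$ from the inclusion $V(\widetilde{I}_N)\subset V(I_N)$ together with Theorem~\ref{lemma.dim.0}, whereas you derive it directly from the absence of solutions at infinity; both are valid, and you additionally make explicit the degree count that the paper leaves implicit.
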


\begin{proof}
Since $V(\widetilde{I}_N) \subset V(I_N)$, we know from Theorem \ref{lemma.dim.0} that $|V(\widetilde{I}_N)|$ is finite. Moreover, by arguing as in the proof of Lemma \ref{I.tilde.solution}, we know that the system $\alpha_1=0, \dots, \alpha_{N-1}=0, \widetilde{\alpha}_N=0$ does not have solutions at infinity. Therefore we can apply Theorem \ref{radical.theorem}.
\end{proof}

\begin{prop}\label{rad.ideal.I.tildeN12345}
The ideal $\widetilde{I}^*_N$ is radical for $N=1,2,3,4,5.$
\end{prop}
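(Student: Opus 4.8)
The plan is to establish the five cases $N=1,2,3,4,5$ one at a time; each reduces to a finite, explicit computation, and the two smallest can be disposed of by inspection. For $N=1$ the only generator is $\widetilde{\alpha}_1=s_0^{(0)}-(-1)=2$, a nonzero constant, so $\widetilde{I}^*_1=\mathbb{C}[C_0,C_1]$ is the whole ring and is trivially radical (consistent with $|V(\widetilde{I}_1)|=(1-1)\cdot(1-1)!=0$). For $N=2$ the generators are $\alpha_1=2c_1+c_2$ and $\widetilde{\alpha}_2=c_1-1$, whose homogenizations $2C_1+C_2$ and $C_1-C_0$ are two linearly independent linear forms; hence $\widetilde{I}^*_2$ is generated by a regular sequence of linear forms, its quotient is a polynomial ring, and $\widetilde{I}^*_2$ is prime, in particular radical. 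Thus only $N=3,4,5$ require real work.

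For $N=3,4,5$ the approach is a direct Gr\"obner basis computation in a computer algebra system (Macaulay2, Singular, Magma). One writes down the homogenized generators $\widehat{\alpha}_1,\dots,\widehat{\alpha}_{N-1},\widehat{\widetilde{\alpha}}_N\in\mathbb{C}[C_0,\dots,C_N]$ — here $\widehat{\alpha}_1=2C_1+C_2+\dots+C_N$, each $\widehat{\alpha}_j=s^{(N)}_j(C_1,\dots,C_N)-(-1)^jC_jC_0^{j-1}$ for $2\le j\le N-1$ has degree $j$, and $\widehat{\widetilde{\alpha}}_N=C_1\cdots C_{N-1}-(-1)^NC_0^{N-1}$ has degree $N-1$ — computes a Gr\"obner basis of $\widetilde{I}^*_N$, and checks $\widetilde{I}^*_N=\sqrt{\widetilde{I}^*_N}$ directly (computing the radical of a homogeneous ideal and testing ideal equality are both standard built-in operations). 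An equivalent route is to dehomogenize at $C_0=1$: arguing exactly as in the proof of Lemma~\ref{I.tilde.solution} (cf. Corollary~\ref{cor.infinity}), the projective variety $V(\widetilde{I}^*_N)$ does not meet the hyperplane $C_0=0$, so one may instead test radicality of the zero-dimensional affine ideal $\widetilde{I}_N=\langle\alpha_1,\dots,\alpha_{N-1},\widetilde{\alpha}_N\rangle\subseteq\mathbb{C}[c_1,\dots,c_N]$ — for instance via Seidenberg's lemma ($\widetilde{I}_N$ is radical as soon as, for each variable $c_i$, the monic generator of the elimination ideal $\widetilde{I}_N\cap\mathbb{C}[c_i]$ is squarefree) or via the Jacobian criterion (the Jacobian matrix of the generating system has full rank $N$ at every point of $V(\widetilde{I}_N)$) — provided one also verifies that $\widetilde{I}^*_N$ is saturated with respect to $C_0$, so that it genuinely coincides with the homogenization of $\widetilde{I}_N$. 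As a built-in consistency check, Theorem~\ref{rad.ideal.I.tilde}, via B\'ezout's Theorem~\ref{radical.theorem}, predicts $|V(\widetilde{I}^*_N)|=(N-1)\cdot(N-1)!$, i.e. $4$, $18$, and $96$ for $N=3,4,5$; hence the $\mathbb{C}$-dimension of $\mathbb{C}[C_0,\dots,C_N]/\widetilde{I}^*_N$ read off from the Gr\"obner basis should equal exactly this number and all the corresponding points should be simple.

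The main obstacle is not conceptual but a matter of computational size together with a single point of care. For $N=5$ the ideal is generated by polynomials of degree up to $4$ in five homogeneous variables, and its reduced Gr\"obner basis is already sizable, so the verification genuinely relies on machine computation rather than hand calculation. The point of care is that $\widetilde{I}^*_N$ is \emph{by definition} the ideal generated by the homogenizations of the chosen generators $\{\alpha_1,\dots,\alpha_{N-1},\widetilde{\alpha}_N\}$, which a priori can be strictly smaller than the homogenization of $\widetilde{I}_N$ and can even fail to be radical while $\widetilde{I}_N$ is radical; consequently the radicality test must be applied to $\widetilde{I}^*_N$ itself, or else the shortcut through the affine chart must be accompanied by the verification that $\widetilde{I}^*_N$ is $C_0$-saturated. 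Beyond the absence of solutions at infinity (Lemma~\ref{I.tilde.solution}) and the Nullstellensatz/B\'ezout machinery already set up in Section~1, no further input is needed.
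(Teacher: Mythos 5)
Your proposal is correct and takes essentially the same route as the paper, whose proof is simply a verification of radicality in Maple; you spell out the same computer-algebra check (Gr\"obner basis / radical computation for $N=3,4,5$) in more detail, dispose of $N=1,2$ by hand, and your caveat about testing $\widetilde{I}^*_N$ itself (or checking $C_0$-saturation before working in the affine chart) is a sound precaution rather than a deviation.
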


\begin{proof}
It is a straightforward check by using the programming environment Maple.
\end{proof}

\subsection{Calculations of $|U_N|$ for $N=1,2,3,4,5$}\label{subseccases}

\begin{enumerate}[$\bullet$]

\item $|U_1|=1$. It is straightforward to check that the only Ulam polynomial of degree 1 is $x$.\\

\item $|U_2| = 2$. In this case, we directly solve the system
\begin{eqnarray*}
c_1 + c_2 &=& -c_1,\\
c_1c_2&=&c_2,\\
\end{eqnarray*}
and obtain $U_2=\{ x^2, x^2 + x - 2\}$ and $|U_2|=2$.\\

\item $|U_3| = 6$. We directly solve system~\eqref{systN} again, this time for $N=3$, but now we refer to equation (\ref{incl.excl}). We already know that $|V(I_3^0)| = |U_2| = 2$. Moreover, the ideal $\widetilde{I}_3$ is generated by
\begin{eqnarray*}
&& 2c_1 + c_2 + c_3,\\
&& c_1c_2 + c_1c_3 + c_2c_3 - c_2,\\
&& c_1c_2 -1.\\
\end{eqnarray*}
By solving the associated system for $c_1$, we obtain that $c_1$ must be a zero of the polynomial $(x - 1)\cdot (2x^3 + 2x^2 - 1)$. Therefore, we obtain four solutions given by
\begin{eqnarray*}
(c_1, c_2, c_3) = (1,-1,-1),\\
\displaystyle (c_1, c_2, c_3) = \left( \beta_i, -\frac{1}{\beta_i} , \frac{1}{\beta_i + 1}\right),
\end{eqnarray*}
where $\beta_1, \beta_2$ and $\beta_3$ are the three distinct zeros of $2x^3 + 2x^2 - 1$.

In all of these four solutions we have $c_3\neq 0$, therefore the set $$|V(I_3^0) \cap V(\widetilde{I}_3)|$$ is empty. In conclusion, we have $|U_3| = 2 + 4 = 6$.\\

\item $|U_4| = 23$. We already know that $|V(I_4^0)| = |U_3| = 6$. We verified that the ideal $\widetilde{I}^*_4$ is radical using Maple. Therefore, from  Theorem \ref{rad.ideal.I.tilde}, we have $|V(\widetilde{I}_4)| = 18$.

In order to determine $ |V(I_4^0) \cap V(\widetilde{I}_4)|$, we need to find the number of solutions of the system
\begin{eqnarray*}
&& c_1 + c_2 + c_3 = -c_1,\\
&& c_1c_2 + c_1c_3 + c_2c_3 = c_2,\\
&& c_1c_2c_3 = - c_3,\\
&& c_1c_2c_3 = 1,\\
\end{eqnarray*}
which is the number of solutions in the case $N=3$ with the extra condition $c_1c_2c_3=1$. A simple check shows that exactly one of the solutions in the previous case satisfies this extra condition: $(c_1, c_2, c_3) = (1,-1,-1)$.

In conclusion, we have $|U_4| = 6 + 18 - 1 = 23$.\\

\item $|U_5| = 119$. We already know that $|V(I_5^0)| = |U_4| = 23$. We verified that the ideal $\widetilde{I}^*_5$ is radical using Maple. Therefore, from Theorem \ref{rad.ideal.I.tilde}, we have $|V(\widetilde{I}_5)| = 96$.

The number $ |V(I_5^0) \cap V(\widetilde{I}_5)|$ is equal to the number of solutions of the system 
\begin{eqnarray*}
&& c_1 + c_2 + c_3 + c_4 = -c_1,\\
&& c_1c_2 + c_1c_3 + c_2c_3 +c_4(c_1 + c_2 + c_3)= c_2,\\
&& c_1c_2c_3 + c_4 (c_1c_2 + c_1c_3 + c_2c_3)= - c_3,\\
&& c_1c_2c_3c_4=c_4\\
&& c_1c_2c_3c_4 = -1.\\
\end{eqnarray*}
From the last two equations we obtain that $c_4=-1$. Therefore, the system can be rewritten as follows:
\begin{eqnarray}\label{system.U5}
\begin{array}{l}
c_1 + c_2 + c_3 = 1 -c_1,\\
c_1c_2 + c_1c_3 + c_2c_3 = 1 - c_1 + c_2,\\
c_1c_2c_3 = 1 - c_1 + c_2 - c_3,\\
c_1c_2c_3 = 1.
\end{array}
\end{eqnarray}

From the last two equations we obtain $$-c_1 + c_2 - c_3 =0.$$ By combining this last equation with the first of the two equations, we obtain
\begin{eqnarray}\label{c2c3}
\begin{array}{l}
\displaystyle c_2 = \frac{1}{2}(1-c_1),\\
\\
\displaystyle c_3 = \frac{1}{2}(1 -3c_1).
\end{array}
\end{eqnarray}
We substitute the last two expressions for $c_2$ and $c_3$ into the equation $ c_1c_2 + c_1c_3 + c_2c_3 = 1 - c_1 + c_2$ to obtain a polynomial of degree two in $c_1$ with the roots
$$
c_1 = \frac{3 \pm 4i}{5}.
$$
Now, we use equations (\ref{c2c3}) in order to determine $c_2$ and $c_3$. It is straightforward to check that in  both of the two cases we have $c_1c_2c_3\neq1$. Therefore,  system (\ref{system.U5}) has no solutions and $ |V(I_5^0) \cap V(\widetilde{I}_5)| = 0$. 

In conclusion, we have $|U_5| = 23 + 96 = 119$.\\
\end{enumerate}

\begin{thm}
For all $N \geq 4$, the system $\{ \alpha_j=0\}_{j=1,2, \dots, N}$ has a solution of multiplicity larger than 1. In particular, $|U_N|< N!$ for all $N \geq 4$ and the ideal  $\widehat{I}_N$ generated by the homogenizations of polynomials $\{ \alpha_j\}_{j=1,2, \dots, N}$ is not radical for $N\geq 4$.
\label{INNotRadical}
\end{thm}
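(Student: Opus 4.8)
The plan is to prove the first assertion --- existence of a solution of $\{\alpha_j=0\}_{j=1}^N$ of multiplicity $>1$ --- by exhibiting one explicitly, and then to deduce the other two statements from B\'ezout's Theorem. Since $\deg\alpha_j=j$ (the summand $(-1)^jc_j$ has degree $\le 1$), Theorem~\ref{Bezout} together with Corollary~\ref{cor.infinity} gives
\[
\sum_{P\in V(\widehat I_N)}\mult_{\widehat I_N}(P)=\prod_{j=1}^N j = N! ,
\]
and, because there are no solutions at infinity, $V(\widehat I_N)=V(I_N)$ in the affine chart, so $|V(\widehat I_N)|=|U_N|$ and $\mult_{\widehat I_N}(P)=\mult_{I_N}(P)$ for every solution $P$. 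Consequently, once we know that some solution has multiplicity $\ge 2$, it follows immediately that $|U_N|\le N!-1<N!$; and if $\widehat I_N$ were radical, then by the Nullstellensatz (Theorem~\ref{nullstellensatz}) we would have $\widehat I_N=I(V(\widehat I_N))$, so Remark~\ref{mult} would force $\mult_{\widehat I_N}(P)=1$ for all $P$, contradicting the multiplicity-$\ge 2$ solution. Thus all three claims reduce to producing one non-reduced solution.

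The solution I would use is $\gamma^{(N)}:=(1,-1,-1,0,\dots,0)\in\mathbb C^N$ for $N\ge 4$. A direct substitution into system~\eqref{systN} with $N=3$ shows that $(1,-1,-1)$ is a fixed point of $\psi^{(3)}$ --- equivalently that $x^3+x^2-x-1=(x-1)(x+1)^2$ is an Ulam polynomial --- so by Proposition~\ref{PropTrivialFixedPts}(a) the point $\gamma^{(N)}$ lies in $V(I_N)$ for all $N\ge 4$, the corresponding Ulam polynomial being $p_N(x)=x^{N-3}(x-1)(x+1)^2$. To show $\gamma^{(N)}$ is non-reduced I would argue through the Jacobian $J:=\bigl(\partial\alpha_j/\partial c_k(\gamma^{(N)})\bigr)_{j,k=1}^N$: since each $\alpha_j$ vanishes at $\gamma^{(N)}$, if $\operatorname{rank}J<N$ then the local ring $\mathcal O/I_N\mathcal O$ at $\gamma^{(N)}$ surjects onto $\mathcal O/(I_N\mathcal O+\mathfrak m^2)$, a $\mathbb C$-vector space of dimension $1+N-\operatorname{rank}J\ge 2$, whence $\mult_{I_N}(\gamma^{(N)})=\mult_{\widehat I_N}(\gamma^{(N)})\ge 2$.

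It remains to verify $\det J=0$. Using $\partial e_j/\partial c_k=e_{j-1}(c_1,\dots,\widehat{c_k},\dots,c_N)$, the entries are $J_{jk}=e_{j-1}^{(k)}-(-1)^j\delta_{jk}$, where $e_i^{(k)}$ is the $i$-th elementary symmetric function of the multiset of zeros of $p_N$ with $\gamma_k$ deleted and $e_0^{(k)}=1$. Deleting a zero root does not change these symmetric functions, so for $k=2$ one obtains the elementary symmetric functions of $\{1,-1\}$, and for $k=4$ (an admissible index exactly because $N\ge 4$) those of $\{1,-1,-1\}$. A short computation then shows that columns $2$ and $4$ of $J$ are both equal to $(1,-1,-1,0,\dots,0)^T$, so $J$ has two equal columns and $\det J=0$; equivalently, $(0,1,0,-1,0,\dots,0)\in\ker J$, which reflects the polynomial identity $p_N(x)/(x+1)-p_N(x)/x=x^{N-4}-x^{N-2}$. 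The only delicate point is the bookkeeping of the correction terms $-(-1)^j\delta_{jk}$: at the level of the truncated symmetric functions $e_{j-1}^{(k)}$ alone columns $2$ and $4$ do \emph{not} agree (in rows $j=2$ and $j=4$), and the equality is produced precisely by the $\delta_{jk}$ corrections, so this cancellation has to be checked rather than taken for granted.
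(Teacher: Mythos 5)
Your proposal is correct, but it takes a genuinely different route from the paper's. The paper argues by counting: it combines the inclusion--exclusion identity \eqref{incl.excl} with the explicit computation $|U_4|=23<4!$ from Subsection~\ref{subseccases}, observes that $|U_N|=N!$ would force conditions (a)--(c) to hold simultaneously (in particular $|U_{N-1}|=(N-1)!$), and so propagates $|U_N|<N!$ inductively to all $N\geq 4$; only then does it invoke B\'ezout in the reverse direction to conclude that some solution has multiplicity larger than $1$, hence that $\widehat I_N$ is not radical. You go the other way: you produce the explicit witness $(1,-1,-1,0,\dots,0)$, i.e.\ the Ulam polynomial $x^{N-3}(x-1)(x+1)^2$ obtained from the degree-$3$ Ulam polynomial with repeated zero $-1$ via Proposition~\ref{prop.zeros}(a), verify by the Jacobian criterion that it is a non-reduced point of $V(I_N)$ (your column computation is right: for $k=2$ the column is $(e_0,e_1-1,e_2,0,\dots)^T=(1,-1,-1,0,\dots,0)^T$ and for $k=4$ it is $(e_0,e_1,e_2,e_3-1,0,\dots)^T=(1,-1,-1,0,\dots,0)^T$, so the $\delta_{jk}$ corrections in rows $2$ and $4$ do produce equal columns), and then deduce $|U_N|<N!$ and non-radicality from B\'ezout (with Theorem~\ref{lemma.dim.0} and Corollary~\ref{cor.infinity} guaranteeing $\dim\widehat I_N=0$) and the Nullstellensatz, exactly as the paper does for those final implications. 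What your approach buys: it is independent of the enumeration of $U_4$ (and of the Maple radicality verifications feeding into it), and it pinpoints concretely where the multiplicity excess comes from, namely Ulam polynomials with a repeated zero, which is more informative than the bare count. What it costs: you rely on the Jacobian/tangent-space criterion for multiplicity at least $2$ and on identifying the affine local multiplicity with the multiplicity in B\'ezout's theorem on the chart $C_0\neq 0$; both are standard, but they go beyond the minimal facts about multiplicity the paper commits to in Remark~\ref{mult}, so those two steps should be stated carefully with a reference if you write this up.
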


\begin{proof}
B\'ezout's Theorem tells us that there is a solution of multiplicity larger than 1 if and only if $|U_N|<N!$. From formula (\ref{incl.excl}) we have that $|U_N|=N!$ if and only if the following three conditions are simultaneously satisfied:
\begin{enumerate}[(a)]
\item $|U_{N-1}|=(N-1)!$,
\item $|V(\widetilde{I}_N)|=(N-1)\cdot(N-1)!$
\item  $|V(I_N^0) \cap V(\widetilde{I}_N)|=0$.
\end{enumerate}
If for some $N_0$ any of the above conditions is false, then $|U_N|<N!$ for all $N \geq N_0$. As we showed above, we have $|U_4|=23 < 4!$.
\end{proof}

\begin{rem} It is evident that $|U_N|\geq 1$ because $x^N$ is a (trivial) Ulam polynomial. Sharper lower bounds for $U_N$ can be obtained by using Proposition~\ref{prop.zeros}. If $p_m(x)$ is  an Ulam polynomial of degree $m$, then $x^k p_m(x)$ is an Ulam polynomial of degree $m+k$. In Subsection~\ref{subseccases} we showed the existence of $8$ nontrivial Ulam polynomials of degrees $2$ and $3$, thus there exist at least $8$ nontrivial Ulam polynomials of degree $N$, for all $N\geq 4$. 
In general, for every positive integer $N$, there exists an injective map $U_N \to U_{N+1}$ given by the multiplication by the variable $x$. In particular, $|U_{N+1}|\geq|U_{N}|$. Moreover, $|U_{N+1}|>|U_{N}|$ if and only if there exists an Ulam polynomial of degree $N+1$ whose coefficients are all different from 0, that is to say, if and only if there exists an Ulam polynomial for which $c_N \neq 0$.
\end{rem}

\begin{thm}
For every integer $N \geq 2$, there exists an Ulam polynomial of degree either $N$ or $N-1$ with all coefficients different from zero.
\end{thm}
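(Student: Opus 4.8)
The plan is to leverage the bijection $\phi: U_{N-1} \to U_N^0$ (multiplication by $x$) together with the counting formula~\eqref{incl.excl} to show that the sequence $|U_N|$ cannot stagnate. Recall from the Remark preceding this theorem that $|U_{N+1}| > |U_N|$ if and only if there exists an Ulam polynomial of degree $N+1$ all of whose coefficients are nonzero, equivalently $c_N \neq 0$. So the statement ``there exists an Ulam polynomial of degree $N$ or $N-1$ with all coefficients nonzero'' is equivalent to ``$|U_N| > |U_{N-1}|$ or $|U_{N-1}| > |U_{N-2}|$''. The strategy is therefore to argue by contradiction: if \emph{both} fail, then $|U_{N-2}| = |U_{N-1}| = |U_N|$, and I want to extract a contradiction from the rigidity of equation~\eqref{incl.excl}.

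First I would unwind what $|U_{N-1}| = |U_N|$ forces. Since $|V(I_N^0)| = |U_{N-1}|$ by the Proposition above, and $|U_N| = |V(I_N^0)| + |V(\widetilde I_N)| - |V(I_N^0) \cap V(\widetilde I_N)|$, the equality $|U_N| = |U_{N-1}|$ says precisely that
\[
|V(\widetilde I_N)| = |V(I_N^0) \cap V(\widetilde I_N)|,
\]
i.e. $V(\widetilde I_N) \subseteq V(I_N^0)$, which means \emph{every} Ulam polynomial with $c_N \neq 0$ actually has $c_N = 0$ — so there are \emph{no} degree-$N$ Ulam polynomials with nonvanishing leading-adjacent coefficient, hence (by Proposition~\ref{prop.zeros}(b)) every degree-$N$ Ulam polynomial is $x^N$ times a lower-degree one. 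But by Lemma~\ref{I.tilde.solution} the system $\{\alpha_1 = 0, \dots, \alpha_{N-1} = 0, \widetilde\alpha_N = 0\}$ always has at least one solution, i.e. $V(\widetilde I_N) \neq \emptyset$. Combining: that solution lies in $V(I_N^0)$, so it has $c_N = 0$; but $\widetilde\alpha_N = s_{N-1}^{(N-1)}(c_1,\dots,c_{N-1}) - (-1)^N = 0$ is a constraint only on $c_1, \dots, c_{N-1}$, and together with $c_N = 0$ and $\alpha_1 = \dots = \alpha_{N-1} = 0$ this should force — via the same Proposition~\ref{prop.zeros}(b) propagation of zeros run \emph{backwards} from the vanishing of $s_{N-1}^{(N-1)} = c_1 \cdots c_{N-1} = (-1)^N \neq 0$ — that none of $c_1, \dots, c_{N-1}$ can vanish either. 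So this solution gives an Ulam polynomial of degree $N-1$ (namely with coefficients $(c_1, \dots, c_{N-1})$, since $\widetilde\alpha_N = 0$ is exactly the last equation of system~\eqref{systN} at level $N-1$) with all coefficients nonzero, whence $|U_{N-1}| > |U_{N-2}|$.

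The cleanest way to phrase the whole argument: I would prove the contrapositive chain ``$V(\widetilde I_N) \neq \emptyset$'' (Lemma~\ref{I.tilde.solution}) $\Rightarrow$ ``either some element of $V(\widetilde I_N)$ has $c_N \neq 0$, giving a degree-$N$ example, or every element has $c_N = 0$, in which case each such element restricts to a degree-$(N-1)$ Ulam polynomial forced to have all coefficients nonzero because $c_1 \cdots c_{N-1} = \pm 1$.'' The key observation making the second branch work is the algebraic identity $s_{N-1}^{(N-1)}(c_1, \dots, c_{N-1}) = c_1 c_2 \cdots c_{N-1}$, so $\widetilde\alpha_N = 0$ literally says $c_1 \cdots c_{N-1} = (-1)^N$, which is incompatible with any $c_j = 0$. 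One subtlety to check carefully: I must confirm that a vector $(c_1, \dots, c_{N-1})$ with $\alpha_j(c_1, \dots, c_{N-1}, 0) = 0$ for $j = 1, \dots, N-1$ — note these are the $\alpha_j$ at level $N$ evaluated with $c_N = 0$ — genuinely coincides with the level-$(N-1)$ fixed-point equations; this holds because $s_j^{(N)}(c_1, \dots, c_{N-1}, 0) = s_j^{(N-1)}(c_1, \dots, c_{N-1})$, and then $\widetilde\alpha_N = 0$ supplies the missing $N$-th equation $c_{N-1} = (-1)^{N-1} s_{N-1}^{(N-1)}$.

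The main obstacle I anticipate is the second branch's claim that $V(\widetilde I_N)$ being entirely contained in $\{c_N = 0\}$ actually produces a \emph{degree-exactly-$(N-1)$} Ulam polynomial with all coefficients nonzero, rather than something degenerate: I need the solution guaranteed by Lemma~\ref{I.tilde.solution} to have $c_1 \neq 0$ (so it is not itself of the form $x^{N-1}$ padded with zeros), but this is exactly what $c_1 \cdots c_{N-1} = (-1)^N \neq 0$ delivers. So in fact there is no real obstacle once the identity $s_{N-1}^{(N-1)} = c_1 \cdots c_{N-1}$ is brought to bear — the apparent difficulty dissolves, and the proof is short.
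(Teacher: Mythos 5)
Your final ``cleanest way'' formulation is exactly the paper's proof: use Lemma~\ref{I.tilde.solution} to get a point of $V(\widetilde{I}_N)$, split on whether $c_N\neq 0$ (then Proposition~\ref{prop.zeros}(b) gives all coefficients nonzero) or $c_N=0$ (then the first $N-1$ coordinates form a degree-$(N-1)$ Ulam polynomial with $c_1\cdots c_{N-1}=(-1)^N\neq 0$ from $\widetilde{\alpha}_N=0$), so the proposal is correct and takes essentially the same approach; the preliminary counting detour via~\eqref{incl.excl} is unnecessary. One small inaccuracy worth fixing: $\widetilde{\alpha}_N=0$ does not ``supply the missing $N$-th equation'' of the level-$(N-1)$ system --- that system is already fully implied by $\alpha_1=\dots=\alpha_{N-1}=0$ at $c_N=0$ --- it only supplies the nonvanishing condition $c_1\cdots c_{N-1}=(-1)^N$.
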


\begin{proof} From Lemma \ref{I.tilde.solution}, the set $V(\widetilde{I}_N)$ must have at least one element $(\gamma_1, \gamma_2, \dots, \gamma_N)$. We observe that if $\gamma_k=0$ for some $k<N$, then all the subsequent $\gamma_m=0$ for all $m$ such that $k\leq m\leq N$, cf. Proposition~\ref{prop.zeros}. Therefore, if $\gamma_N \neq 0$, then  $\gamma_k\neq 0$ for all $k<N$ and we are done.

Otherwise, if $\gamma_N=0$, then a $N$-tuple $(\gamma_1, \dots, \gamma_{N-1}, 0)$ is a solution of the system $\alpha_1, \dots, \alpha_{N-1}, \widetilde{\alpha}_N$ if and only if 
$$
P(x):= x^{N-1} + \sum_{i=1}^{N-1} \gamma_i x^{N-1-i}
$$
is an Ulam polynomial of degree $N-1$. Moreover, since from equation $\widetilde{\alpha}_N$ we have $\gamma_1 \gamma_2 \dots \gamma_{N-1} \neq 0$, all the coefficients of $P(x)$ are different from zero.
\end{proof}

In particular we  obtain the following result.

\begin{cor}
For every positive integer $N$ there exists an Ulam polynomial of degree $N$ or $N+1$ with all coefficients different from zero.
\end{cor}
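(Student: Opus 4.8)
The plan is to read off the corollary from the theorem immediately above by the index shift $N \mapsto N+1$. Fix a positive integer $N$; then $N+1 \geq 2$, so that theorem applies with $N+1$ in the role of its index and furnishes an Ulam polynomial, all of whose coefficients are nonzero, whose degree is either $N+1$ or $(N+1)-1=N$. Since ``degree $N$ or $N+1$'' is exactly the asserted conclusion, nothing more is required; the corollary is a re-indexing of the theorem.

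When writing this out I would emphasize that the substantive work has already been done in the proof of the preceding theorem: one starts from a point $(\gamma_1,\dots,\gamma_{N+1}) \in V(\widetilde{I}_{N+1})$, which exists by Lemma~\ref{I.tilde.solution}; in the case $\gamma_{N+1}\neq 0$ Proposition~\ref{prop.zeros} forces all $\gamma_k \neq 0$, giving degree $N+1$; in the case $\gamma_{N+1}=0$ the truncation $x^{N}+\sum_{i=1}^{N}\gamma_i x^{N-i}$ is an Ulam polynomial of degree $N$, and its coefficients are nonzero because the equation $\widetilde{\alpha}_{N+1}=0$ reads $\gamma_1\cdots\gamma_{N} = (-1)^{N+1} \neq 0$. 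The only thing the corollary's proof adds is the observation that these two alternatives are precisely ``degree $N+1$'' and ``degree $N$''.

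I do not expect a real obstacle. The one place that merits a line of care is the endpoint $N=1$: there the ``degree $N$'' alternative is degree $1$, whose only Ulam polynomial is $x$, a polynomial with vanishing coefficient, so one must make sure that the theorem then necessarily produces the degree-$2$ alternative. It does: by the computation in Subsection~\ref{subseccases} one has $V(\widetilde{I}_2)=\{(1,-2)\}$, whence the Ulam polynomial $x^2+x-2$ with both coefficients nonzero, and the degenerate case $\gamma_2=0$ simply does not occur. Consequently the proof will be a single short paragraph: apply the preceding theorem with $N$ replaced by $N+1$.
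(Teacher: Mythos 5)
Your proposal is correct and coincides with the paper's (implicit) argument: the corollary is obtained from the preceding theorem simply by replacing $N$ with $N+1\geq 2$, which yields exactly the stated disjunction ``degree $N$ or $N+1$.'' Your extra care at $N=1$ is harmless but not logically needed, since the corollary only asserts the disjunction and does not claim which alternative occurs.
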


The reader who is interested in a different approach to studying Ulam polynomials may check \cite{Ca-Le}, where the authors mainly focus on Ulam polynomials having one coefficient equal to 0, or one coefficient equal to 1, or neither.

\section{Ulam Polynomial Eigenfunctions of Hypergeometric Type Differential Operators}

In this section we answer the following question: Are there sequences of Ulam polynomials that are eigenfunctions  of hypergeometric type differential operators? This question is related to a more general question: Are there sequences of Ulam polynomials that are orthogonal with respect to some measure?

Let $\{p_N(x)\}_{N=0}^\infty$ be a sequence of Ulam polynomials, in which the $N$-th term is given by $p_N(x)=x^N+\sum_{n=1}^N \gamma^{(N)}_n x^{N-n}=\prod_{m=1}^N(x-\gamma^{(N)}_m)$. 
Of course, for each $N \in \mathbb{N}$, the coefficients $(\gamma_1^{(N)}, \ldots, \gamma_N^{(N)})$ solve system~\eqref{systN}. A monic polynomial $y_N(x)=x^N+\sum_{j=1}^N C_j^{(N)}x^{N-j}$ of degree~$N$ is an eigenfunction of the differential operator $p(x)\frac{d^2}{dx^2}+q(x) \frac{d}{dx}$, that is
\begin{equation}
p~y_N''+q~y_N'+\lambda_N~y_N=0,
\label{DiffEqnHypergeomTypeN}
\end{equation}
where $p(x)=\alpha x^2+\beta x +\delta$ and $q(x)=-(x+a_1)$, if and only if $\lambda_N=N-N(N-1)\alpha$ and the coefficients $\{C_j^{(N)}\}_{j=1}^N$ satisfy the recurrence relations
\begin{eqnarray}
&&C_1^{(N)}=\frac{N a_1-N(N-1) \beta}{\lambda_N-\lambda_{N-1}},\notag\\
&&C_2^{(N)}=\frac{(N-1)\left[a_1-(N-2) \beta \right]}{\lambda_N-\lambda_{N-2}} C^{(N)}_1-\frac{ N(N-1) \delta}{\lambda_N-\lambda_{N-2}},\notag\\
&&C_j^{(N)}=\frac{(N-j+1)[a_1-(N-j)\beta] }{\lambda_N-\lambda_{N-j}} C_{j-1}^{(N)}\notag\\
&&-\frac{(N-j+2)(N-j+1)\delta}{\lambda_N-\lambda_{N-j}} C_{j-2}^{(N)}, \; j=3,\ldots,N,\notag\\
\label{RecRel}
\end{eqnarray}
compare with formula~(2${}'$) in~\cite{Brenke} and note the errors in that formula. Here and below we assume that the eigenvalues $\{\lambda_N\}_{N=0}^\infty$ are all different among themselves. Also, note that $C^{(1)}_1=a_1.$

Suppose that each polynomial $p_N(x)$ in the sequence $\{p_N(x)\}_{N=0}^\infty$ of Ulam polynomials solves differential equation~\eqref{DiffEqnHypergeomTypeN}. Then its coefficients $(\gamma_1^{(N)}, \ldots, \gamma_N^{(N)})$ not only solve system~\eqref{systN}, but also satisfy recurrence relations~\eqref{RecRel}. In particular, in this case the parameter $a_1$ vanishes because $p_1(x)=x+a_1$ is an Ulam polynomial if and only if $a_1=0$.

For example, if $N=2$, recurrence relations~\eqref{RecRel} with $a_1=0$ yield 
\begin{eqnarray}
\gamma_1^{(2)}=\frac{-2\beta}{1-2\alpha},\notag\\
\gamma_2^{(2)}=\frac{\delta}{\alpha-1}.
\label{eqCoefGamma2}
\end{eqnarray}
 Similarly, if $N=3$, recurrence relations~\eqref{RecRel} with $a_1=0$ yield
 \begin{eqnarray}
\gamma_1^{(3)}=\frac{-6\beta}{1-4\alpha},\notag\\ 
\gamma_2^{(3)}=\frac{3 \left[2 \beta^2+\delta(4\alpha-1)\right]}{(1-3\alpha)(1-4\alpha)},\notag\\
\gamma_3^{(3)}=\frac{4\beta \delta }{(1-2\alpha)(1-4\alpha)}.
\label{eqCoefGamma3}
 \end{eqnarray}

By substituting the above expressions for the five coefficients $\gamma_1^{(2)}, \gamma_2^{(2)}, \gamma_1^{(3)}, \gamma_2^{(3)}, \gamma_3^{(3)}$ into the system
\begin{eqnarray*}
2\gamma_1^{(2)}+\gamma_2^{(2)}=0,\\
\gamma_2^{(2)}-\gamma_1^{(2)}\gamma_2^{(2)}=0\\
2\gamma_1^{(3)}+\gamma_2^{(3)}+\gamma_3^{(3)}=0,\\
\gamma_2^{(3)}-\gamma_1^{(3)}\gamma_2^{(3)}-\gamma_2^{(3)}\gamma_3^{(3)}-\gamma_1^{(3)}\gamma_3^{(3)}=0,\\
\gamma_3^{(3)}+\gamma_1^{(3)} \gamma_2^{(3)}\gamma_3^{(3)}=0,
\end{eqnarray*}
taking into account that $\alpha \neq 1/2$ (otherwise $\lambda_1=\lambda_2$), we obtain $a_1=\beta=\delta=0$. 
Note that the first two equations of the last system ensure that $p_2(x)=x^2+\gamma_1^{(2)}x+ \gamma_2^{(2)}$ is an Ulam polynomial, while the remaining three equations ensure that $p_3(x)=x^3+\gamma_1^{(3)} x^2+ \gamma_2^{(3)}x+ \gamma_3^{(3)}$ is also an Ulam polynomial. Therefore, by~\eqref{eqCoefGamma2},\eqref{eqCoefGamma3}, $\gamma_1^{(2)}=\gamma_2^{(2)}=\gamma_1^{(3)}=\gamma_2^{(3)}=\gamma_3^{(3)}=0$.  Moreover, from~\eqref{RecRel} we conclude that $\gamma_j^{(N)}=0$ for all $j=1,\ldots, N$, where $N \in \mathbb{N}$.

It is easy to verify that for each $N \in \mathbb{N}$, the Ulam polynomial $p_N(x)=x^N$ solves  differential equation~\eqref{DiffEqnHypergeomTypeN} with  $a_1=\beta=\delta=0$. Thus we have proved the following result.

\begin{thm} If a system of polynomials $\{y_N(x)\}_{N=0}^\infty$ is such that each $y_N$ is an Ulam polynomial of degree $N$ and a solution of differential equation~\eqref{DiffEqnHypergeomTypeN}, then $a_1=\beta=\delta=0$ and $y_N=x^N$ for all $N=0,1,2\ldots$ In other words, the only Ulam polynomial eigenfunctions of hypergepometric type differential operators are the polynomials $\{x^N\}_{N=0}^\infty$, which are eigenfunctions of the differential operator $\alpha x^2 \frac{d^2}{dx^2}-x\frac{d}{dx}$ with the corresponding eigenvalues $\{N(N-1)\alpha-N\}_{N=0}^\infty$.
\end{thm}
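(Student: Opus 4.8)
The plan is to show that requiring every member of the sequence $\{y_N\}$ to be simultaneously an Ulam polynomial and an eigenfunction of the hypergeometric-type operator forces the three parameters $a_1,\beta,\delta$ to vanish, after which the recurrence~\eqref{RecRel} collapses to $C_j^{(N)}=0$ and the differential equation itself degenerates to the claimed form. The key observation is that only finitely many of the constraints are needed: the low-degree polynomials $y_1,y_2,y_3$ already overdetermine the parameters.

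\textbf{Step 1 (degree one).} Since $p_1(x)=x+a_1$ is an Ulam polynomial exactly when $a_1=0$, the requirement on $y_1$ immediately gives $a_1=0$. This is the trivial but essential starting point: it removes one parameter and simplifies~\eqref{RecRel}.

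\textbf{Step 2 (explicit low-degree coefficients).} With $a_1=0$, I would write out the coefficients $\gamma_1^{(2)},\gamma_2^{(2)}$ of $y_2$ and $\gamma_1^{(3)},\gamma_2^{(3)},\gamma_3^{(3)}$ of $y_3$ as rational functions of $\alpha,\beta,\delta$ using~\eqref{RecRel}; these are~\eqref{eqCoefGamma2} and~\eqref{eqCoefGamma3}. Throughout I keep the standing assumption that the $\lambda_N$ are distinct, which in particular gives $\alpha\neq 1/2$, $\alpha\neq 1/3$, $\alpha\neq 1/4$, so all denominators appearing are nonzero and these formulas are legitimate.

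\textbf{Step 3 (impose the Ulam conditions on $y_2$ and $y_3$).} The Ulam condition~\eqref{systN} for $N=2$ and $N=3$ gives a system of five polynomial equations in the five quantities $\gamma_i^{(j)}$; substituting the rational expressions from Step 2 yields polynomial/rational equations purely in $\alpha,\beta,\delta$. The claim is that, under the nondegeneracy hypotheses, the only solution is $\beta=\delta=0$. I would verify this by elimination: e.g. the $N=2$ equation $2\gamma_1^{(2)}+\gamma_2^{(2)}=0$ relates $\beta$ and $\delta$ linearly, the second $N=2$ equation then forces one of them to vanish, and the $N=3$ equations finish the job. This is the computational heart of the argument, and the main obstacle is simply organizing the elimination so that no spurious branches (corresponding to the forbidden values of $\alpha$) survive; it is routine but must be done carefully.

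\textbf{Step 4 (propagate to all $N$ and identify the operator).} Once $a_1=\beta=\delta=0$, every right-hand side in the recurrence~\eqref{RecRel} vanishes, so by induction on $j$ we get $C_j^{(N)}=0$ for all $j$ and all $N$; hence $y_N(x)=x^N$. Finally, substituting $a_1=\beta=\delta=0$ into~\eqref{DiffEqnHypergeomTypeN} gives $p(x)=\alpha x^2$, $q(x)=-x$, and $\lambda_N=N-N(N-1)\alpha$, so $x^N$ is indeed an eigenfunction of $\alpha x^2\frac{d^2}{dx^2}-x\frac{d}{dx}$ with eigenvalue $N(N-1)\alpha-N$, which one checks directly since $\frac{d}{dx}x^N=Nx^{N-1}$ and $\frac{d^2}{dx^2}x^N=N(N-1)x^{N-2}$. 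This completes the proof. I expect Step 3 to be the only place requiring genuine work; everything else is bookkeeping with the recurrence and a one-line verification.
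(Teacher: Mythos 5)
Your proposal is correct and follows essentially the same route as the paper: deduce $a_1=0$ from the degree-one Ulam condition, use the recurrence~\eqref{RecRel} to express the $N=2,3$ coefficients as in~\eqref{eqCoefGamma2} and~\eqref{eqCoefGamma3}, impose the five Ulam conditions to force $\beta=\delta=0$ (noting the excluded values of $\alpha$ coming from distinctness of the $\lambda_N$), and then propagate through the recurrence to get $y_N=x^N$. The only slight imprecision is in your sketch of the elimination: the second $N=2$ equation factors as $\gamma_2^{(2)}(1-\gamma_1^{(2)})=0$, so the branch $\gamma_1^{(2)}=1$, $\gamma_2^{(2)}=-2$ (the nontrivial quadratic Ulam polynomial) is not killed at that stage and must indeed be eliminated by the $N=3$ equations, exactly as you anticipate when you say those equations ``finish the job.''
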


\section{Zeros of Ulam Polynomials as Equilibria of Certain Dynamical Systems}

Let ${\gamma}=(\gamma_1, \ldots, \gamma_N)$ be the coefficients of an Ulam polynomial
\begin{equation}
p_N(z)\equiv  p_N(z, {\gamma})=z^N+\sum_{m=1}^N \gamma_m z^{N-m}=\prod_{n=1}^N (z-\gamma_n)
\end{equation}
such that the components of ${\gamma}$ are all different among themselves. Consider the  polynomial 
\begin{eqnarray}
&&q_N(z,t)\equiv q_N(z,t; a, {b})\notag\\
&&= e^t z^N+\sum_{m=1}^N \left[ \gamma_m ( e^t+a)+b_m \right] z^{N-m}
=e^t \prod_{n=1}^N (z-\zeta_n(t))
\label{df:qN}
\end{eqnarray}
with time-dependend coefficients, where $a$ is a constant and ${b}=(b_1,\ldots, b_N)$ is a $N$-vector of constants, while $\zeta_n(t)$ are the zeros of the polynomial $q_N(z,t)$. Upon differentiation of $q_N(z,t)$ defined by~\eqref{df:qN} with respect to $t$ followed by the substitution $z=\zeta_n(t)$, we obtain a system of nonlinear ODEs satisfied by the time-dependent zeros $\zeta_n(t)$ of $q_N(z,t)$:
\begin{eqnarray}
\dot{\zeta}_n(t)=-\left[ \prod_{\ell=1, \ell \neq n}^N (\zeta_n-\zeta_\ell)^{-1} \right] \left[ (\zeta_n)^N+\sum_{m=1}^N \gamma_m (\zeta_n)^{N-m}\right].
\label{zetandot}
\end{eqnarray}
System~\eqref{zetandot} is \textit{solvable} in the sense that the process of finding its solutions can be reduced to the process of finding the zeros of the polynomial $q_N(z,t)$.
Clearly, the vector of coefficients (and the zeros) ${\gamma}$ of the Ulam polynomial $p_N(z; {\gamma})$ is an equilibrium of system~\eqref{zetandot}. The same is true for each of the distinct vectors ${\gamma}_\sigma$ obtained by permuting the components of ${\gamma}$, where $1\leq \sigma\leq N!$.

Let us linearize system~\eqref{zetandot} about its equilibrium ${\gamma}$. For convenience, let us denote the right-hand side of the $n$-th equation in system~\eqref{zetandot} by $f_n({\zeta})$, where ${\zeta}=(\zeta_1,\ldots, \zeta_N)$ and consider the vector function $${f}({\zeta})=\left(f_1({\zeta}), \ldots, f_N({\zeta})\right)$$
so that system~\eqref{zetandot} is recast in the form
\begin{equation}
\frac{d {\zeta}}{dt}={f}({\zeta}).
\label{zetandotf}
\end{equation}
Note that the function $f$ is of class $C^2$ in an open neighborhood of the point $\gamma$ because the components of $\gamma$ are all different among themselves.
 By Taylor's Theorem, there exists a constant $\alpha>0$ such that for every $\zeta$ in the open ball $B({\gamma}, \alpha)$ centered at ${\gamma}$ and having the radius $\alpha$ we have
\begin{equation}
f({\zeta})= Df(\gamma) (\zeta-\gamma)+\tilde{g}(\zeta).
\end{equation}
Moreover, there exist positive constants $\beta$ and $\kappa$ such that for every $\zeta \in B(\gamma, \beta)$ we have
$$
\left| \tilde{g}(\zeta)\right|<\kappa \left| \zeta-\gamma \right|^2.
$$
It is easy to verify that $Df(\gamma) =-I$ is the negative of the $N\times N$ identity matrix $I$. We thus recast system~\eqref{zetandot} or~\eqref{zetandotf} as 
\begin{equation}
\dot{\xi}=-\xi+g(\xi),
\label{xidotNonlin}
\end{equation} 
where $\xi(t)=\zeta(t)-\gamma$ and $g(\xi)=\tilde{g}(\xi+\gamma)$ satisfies 
$$
\left| {g}(\xi)\right|<\kappa \left|\xi \right|^2
$$
for all $\xi \in B(0, \beta)$.
The fundamental matrix solution of the linearization 
\begin{equation}
\dot{y}=-y,
\label{xidotLinearized}
\end{equation} 
of system~\eqref{xidotNonlin} is $e^{-It}$, where $y(t)=\left( y_1(t), \ldots, y_N(t)\right)$. Therefore, by the variation of parameters formula~\cite{ChiconeBook}, the solution $\xi(t)$ of system~\eqref{xidotNonlin} with the initial condition $\xi(t_0)=\xi_0$ is given by
\begin{eqnarray}
\xi(t)=e^{-I(t-t_0)} \xi_0 +\int_{t_0}^t e^{-I(t-s)} g\left( \xi(s)\right)\, ds.
\end{eqnarray}
By a theorem about stability of equilibria of nonlinear dynamical systems~\cite{ChiconeBook}, $0 \in \mathbb{R}^N$ is an asymptotically stable equilibrium of  system~\eqref{xidotNonlin}, hence $\gamma$ is a stable equilibrium of system~\eqref{zetandot}.

We summarize the last result in the following theorem.

\begin{thm} If $\gamma=(\gamma_1, \ldots, \gamma_N)$ is a fixed point of the Ulam map $\psi^{(N)}$ such that its components $\gamma_n$ are all different among themselves, then $\gamma$ is an asymptotically stable equilibrium of the solvable nonlinear dynamical system~\eqref{zetandot}.
\end{thm}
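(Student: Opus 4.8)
The plan is to recognize system~\eqref{zetandot} as an autonomous ODE $\dot{\zeta}=f(\zeta)$ whose right-hand side is $C^2$ near $\gamma$, to show $\gamma$ is an equilibrium with Jacobian equal to $-I$, and then to invoke the principle of linearized stability. First I would observe that the second bracket in the $n$-th equation of~\eqref{zetandot} is nothing but $p_N(\zeta_n)$, where $p_N$ is the \emph{fixed} Ulam polynomial with coefficients $\gamma$, so that, using $p_N(x)=\prod_{m=1}^N(x-\gamma_m)$,
\begin{equation*}
f_n(\zeta)=-\frac{\prod_{m=1}^N(\zeta_n-\gamma_m)}{\prod_{\ell=1,\ell\neq n}^N(\zeta_n-\zeta_\ell)}.
\end{equation*}
Since the components of $\gamma$ are pairwise distinct, the denominator is nonzero on a neighborhood of $\gamma$, so $f\in C^2$ there; and $f(\gamma)=0$ because $p_N(\gamma_n)=0$ for every $n$ by~\eqref{rel1} (each coefficient $\gamma_n$ is also a zero of $p_N$). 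Hence $\gamma$ is an equilibrium.

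Next I would compute $Df(\gamma)$. For $k\neq n$ only the denominator of $f_n$ depends on $\zeta_k$, so $\partial f_n/\partial\zeta_k$ carries the factor $p_N(\zeta_n)$ in the numerator, which vanishes at $\zeta=\gamma$; thus all off-diagonal entries are $0$. For the diagonal entry, the product rule gives two terms, one of which again carries the factor $p_N(\gamma_n)=0$ and drops out, leaving
\begin{equation*}
\frac{\partial f_n}{\partial\zeta_n}\Big|_{\zeta=\gamma}=-\frac{p_N'(\gamma_n)}{\prod_{\ell\neq n}(\gamma_n-\gamma_\ell)}=-1,
\end{equation*}
because $p_N'(\gamma_n)=\prod_{m\neq n}(\gamma_n-\gamma_m)$ is exactly the denominator. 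Therefore $Df(\gamma)=-I$. This Jacobian computation is the one genuinely substantive step; the point that makes it painless is the factorization above, which forces the "dangerous" terms (those involving derivatives of the reciprocal of the Vandermonde-type product) to be multiplied by $p_N(\gamma_n)=0$.

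Finally, by Taylor's theorem applied to the $C^2$ map $f$, there are constants $\alpha,\kappa>0$ with $f(\zeta)=-(\zeta-\gamma)+\tilde g(\zeta)$ and $|\tilde g(\zeta)|<\kappa|\zeta-\gamma|^2$ for $\zeta\in B(\gamma,\alpha)$; setting $\xi=\zeta-\gamma$ recasts the system as $\dot{\xi}=-\xi+g(\xi)$ with $|g(\xi)|<\kappa|\xi|^2$ near $0$, as in~\eqref{xidotNonlin}. The linearization~\eqref{xidotLinearized} has fundamental matrix $e^{-It}$, whose only eigenvalue $-1$ lies in the open left half-plane, so a standard variation-of-parameters plus Gr\"onwall argument (the stability theorem for equilibria of nonlinear systems cited in the text) shows $0$ is asymptotically stable for~\eqref{xidotNonlin}, and hence $\gamma$ is an asymptotically stable equilibrium of~\eqref{zetandot}. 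No further obstacle is expected beyond the Jacobian evaluation.
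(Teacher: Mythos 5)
Your proposal is correct and follows essentially the same route as the paper: identify $\gamma$ as an equilibrium, verify $Df(\gamma)=-I$, and conclude asymptotic stability via Taylor expansion, the recast system $\dot{\xi}=-\xi+g(\xi)$, and the standard linearized-stability/variation-of-parameters argument. The only difference is that you carry out explicitly the Jacobian computation (using $p_N(\gamma_n)=0$ and $p_N'(\gamma_n)=\prod_{m\neq n}(\gamma_n-\gamma_m)$) that the paper dismisses as ``easy to verify,'' which is a welcome addition rather than a deviation.
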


\section{Discussion and Outlook}

The authors plan to improve the  results reported in this paper by obtaining sharper estimates or exact formulas for the number of Ulam polynomials of degree $N$. Other possible investigations include discovery of differential equations satisfied by Ulam polynomials, existence or non-existence of measures with respect to which sequences of Ulam polynomials are orthogonal and further investigation of dynamical systems such that their equilibria are the zeros of Ulam polynomials. We also plan to study sequences of monic polynomials  defined by
\begin{eqnarray*}
p_0(x)=x^N+\sum_{m=1}^N{c_m^{(0)} x^{N-m}},\\
p_1(x)=\prod_{m=1}^N (x-c_m^{(0)})=x^N+\sum_{m=1}^N{c_m^{(1)} x^{N-m}},\\
\ldots\\
p_n(x)=\prod_{m=1}^N (x-c_m^{(n-1)})=x^N+\sum_{m=1}^N{c_m^{(n)} x^{N-m}},\\
\ldots,
\end{eqnarray*}
in contrast with the hierarchies of monic polynomials introduced in~\cite{BC2016}. In particular, in Summer 2015 the authors conceived the idea to study periodic orbits of the operators $T(n)=p_n(x)$ defined in terms of such sequences.

\section{History and Acknowledgements}
 The work on this paper began in Summer~2015 during O.~Bihun's visit to the ``La Sapienza'' University of Rome. The first electronic draft is dated Feb. 11, 2016. Some more work on the paper was done during O.~Bihun's visit to the ``La Sapienza'' University of Rome in June~2016 and D.~Fulghesu's visit to the Scuola Normale Superiore in Pisa in June-July~2016. Both authors are grateful for the hospitality of the respective institutions. D.~Fulghesu's research is supported in part by the Simons Foundation Collaborations for Mathematicians grant \#360311.

\end{document}